\documentclass{article}
\usepackage[utf8]{inputenc}
\usepackage[margin = 1in]{geometry}

\usepackage{amsmath}
\usepackage{hyperref}
\usepackage{amsthm}

\newtheorem{proposition}{Proposition}
\theoremstyle{definition}
\newtheorem{definition}{Definition}

\theoremstyle{remark}
\newtheorem{remark}{Remark}
\newtheorem{corollary}{Corollary}

\usepackage[T1]{fontenc}
\usepackage[table]{xcolor}

\usepackage{ dsfont }
\def\bm{\ensuremath\mathbf}
\usepackage{tcolorbox}

\usepackage{lipsum}
\usepackage{amsfonts}
\usepackage{graphicx}
\usepackage{epstopdf}

\usepackage{algorithm}
\usepackage{algorithmic}

\usepackage{enumitem}
\setlist[enumerate]{leftmargin=.5in}
\setlist[itemize]{leftmargin=.5in}

\title{Analytical solutions for some quadratic ODEs found via linear rational eigenfunctions and
the rational eigenfunction variety}

\author{Megan Morrison$^{1*}$ and Sonja Petrović$^1$\\[.1in]
$^1$ Department of Applied Mathematics, Illinois Institute of Technology, Chicago, IL 60616\\
$^*$ mmorrison3@illinoistech.edu
}

\begin{document}

\maketitle

\begin{abstract}
Many important systems across biology, engineering, physics, and economics are characterized by polynomial ordinary differential equations (ODEs), yet analytical solutions are rare.  
We develop a framework for identifying and solving a broad class of two-dimensional quadratic ODEs using linear rational Koopman eigenfunctions.  By imposing a linear rational form on the eigenfunctions, we convert the Koopman eigenfunction PDE into a large algebraic system of polynomials.
We then study the solutions of this polynomial system that satisfy the ODE restrictions; we call the solution set the \emph{rational eigenfunction variety} of an ODE system. 
The nonlinear algebra method uses formal algebraic geometry theory to analyze and solve systems otherwise intractable and to discover relationships between ODE and eigenfunction parameters that must hold to extract eigenfunctions.
We identify families of quadratic ODEs that can be solved analytically, characterize their eigenfunction parameters, and use the resulting eigenfunctions to produce closed-form analytical solutions. \\

\textit{Keywords:}
polynomial ODEs, Koopman eigenfunctions, elimination ideal, singular affine varieties. 
\end{abstract}


\section{Introduction}

\begin{figure}[th!]
    \centering
    \includegraphics[width=0.95\linewidth]{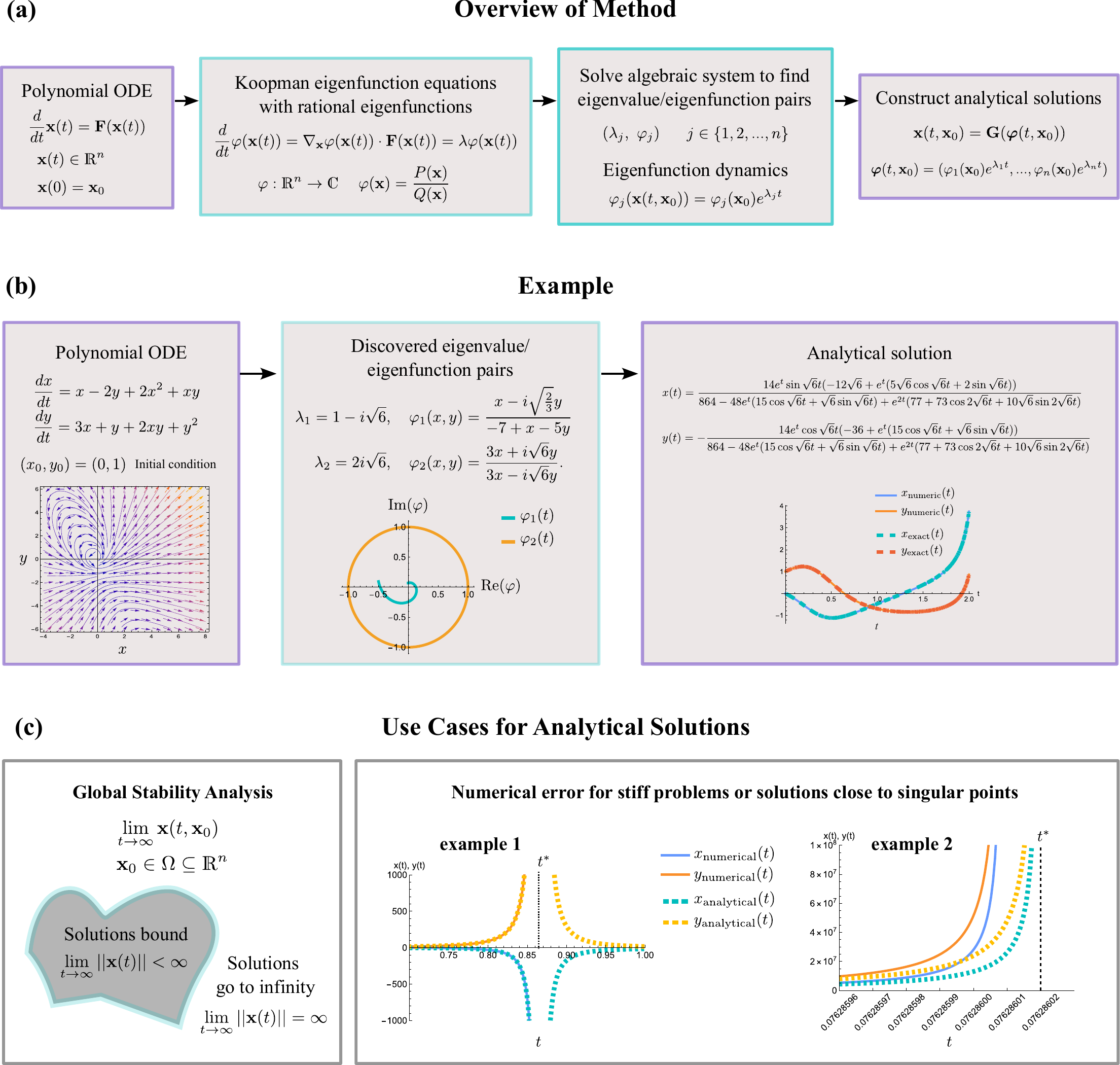}
    \caption{(a) Rational eigenfunction method to solve polynomial ODEs. (b) Example ODE and solution from method. (c) Use cases for analytical solutions --- stability analysis and accurate solutions for stiff problems.}
    \label{fig:intro}
\end{figure}

In contrast to linear systems, nonlinear systems of ordinary differential equations are typically solved numerically, as methods to solve them analytically generally do not exist \cite{kutzDataDrivenModelingScientific2013, vidyasagarNonlinearSystemsAnalysis2002}.
Advanced numerical integration methods such as Runge–Kutta and Adams–Bashforth methods provide a high degree of accuracy. Still, analytical solutions are desirable as they are amenable to analysis, more accurate, and less computationally expensive than numerical solutions \cite{strogatzNonlinearDynamicsChaos2019, braunDifferentialEquationsTheir1978}.
When exact solutions do not exist, approximate analytical solutions are still useful for constructing control methods such as model predictive control and feed-forward control \cite{findeisenStateOutputFeedback2003, morrisonNonlinearControlNetworked2021}.

\smallskip

\paragraph{Koopman methods.}
Koopman methods are a promising avenue for solving nonlinear dynamical systems.  
Koopman theory states that any nonlinear ODE has linear dynamics in the space of observables; the trade-off is that these linear dynamics occur in an infinite-dimensional space \cite{koopmanHamiltonianSystemsTransformation1931, koopmanDynamicalSystemsContinuous1932, bruntonModernKoopmanTheory2021}.
Eigenfunctions of the Koopman operator are special observables that have linear dynamics, and if they are able to be identified, can be used to construct solutions for the original system \cite{bruntonKoopmanInvariantSubspaces2016, morrisonSolvingNonlinearOrdinary2024}. 
Unfortunately, there are no general methods to find eigenfunctions that have a closed-form for systems of ODEs; closed-form expressions for Koopman eigenfunctions are only discoverable in select cases \cite{bolltMatchingEvenRectifying2018,kaiserDatadrivenDiscoveryKoopman2021,bolltGeometricConsiderationsGood2021,pageKoopmanModeExpansions2019}.
While eigenfunction approximations are useful for solution approximations, control problems, and analysis, closed-form eigenfunctions are needed to obtain analytical solutions to ODEs \cite{kordaOptimalConstructionKoopman2020, budisicAppliedKoopmanism2012, kutzDynamicModeDecomposition2016, bruntonDataDrivenScienceEngineering2019}.

\smallskip

\textit{Using methods from nonlinear algebra, we identify closed-form eigenfunctions for classes of quadratic ODEs that currently lack analytical solutions.}

\smallskip

\paragraph{Nonlinear algebra.}
Nonlinear algebra -- a term coined relatively recently that refers to the computationally-inspired view of algebraic geometry -- is a study of solution sets of systems of polynomial equations. The key here is that the methods we use are symbolic, exact, and designed to identify solutions to high-dimensional nonlinear systems, answering questions such as: are there common solutions to a system of nonlinear restrictions that a given ODE has to satisfy? How many solutions are there, and what is the dimension of the solution space? 
Polynomial algebraic systems show up in many applications; we will show how they appear in particular when solving high-dimensional systems generated by nonlinear ODEs and structural restrictions on their eigenfunctions. For a friendly overview of the field, the interested reader may consult the textbook \cite{CLO}, and a more recent book \cite{NLA-Bernd} and overview paper \cite{BreidingEtAl2023}.

In the proposed method (overview shown in Fig.~\ref{fig:intro}(a)) we: 
\begin{enumerate}[noitemsep]
    \item Posit a quadratic polynomial ODE with undetermined parameters;
    \item Construct Koopman eigenfunction equations using parameterized linear rational eigenfunctions;
    \item Extract algebraic equations for the parameters from the eigenfunction equations and solve them using nonlinear algebra;
    \item Use the derived parameters to construct analytical solutions for the ODE.
\end{enumerate}
%
Fig.~\ref{fig:intro}(b) shows an example 2-dimensional quadratic ODE, the derived eigenfunctions using our method, and the resulting analytical solution to the initial value problem posed.

\smallskip

\paragraph{Applications.}
Analytical solutions to systems of ordinary differential equations have many advantages and use cases.  They can be used to determine the stability of a system throughout the entire state space and construct novel control procedures.
Analytical solutions are also advantageous for stiff ODEs where numerical error can be quite large and prohibitively expensive to reduce.  In Fig.~\ref{fig:intro}(c) example 1, the true (analytical) solution spikes around $t^*$ before returning close to zero.  For this initial value problem the numerical solution fails past $t^*$ as the problem is too stiff to solve despite using an adaptive, stiffness-aware high order RK method, implemented by the numerical differential equation solver NSolve in \textit{Mathematica}. In Fig.~\ref{fig:intro}(c) example 2, using the same solver, the numerical solution diverges substantially from the analytical solution as the solution approaches the blow-up time $t^*$.
Analytical solutions enable novel approaches for the analysis and control of ODEs and provide accurate solutions when numerical methods fail.

\smallskip

\paragraph{Paper organization.}
The paper is organized as follows. Section~\ref{sec:eigfunc} describes Koopman eigenfunctions, quadratic ODEs, the eigenfunction equations, and the polynomial system of algebraic equations that arise from imposing a linear rational form on the eigenfunctions. Section~\ref{sec:solve_ode} outlines a method to solve nonlinear ODEs using eigenfunctions. Section~\ref{sec:alg_geo} outlines the high-level of the nonlinear algebra methods used to solve the system of equations generated by the eigenfunctions, with Appendix~\ref{sec:alg_geo_appx} providing further details. 
Section~\ref{sec:solvable_ODEs_eig_params} describes the set of solvable quadratic ODEs and their resulting eigenfunctions.
This is the bridge between the differential equation world and the algebraic world. It tells us exactly which quadratic ODEs are solvable using this method, and how to find the eigenfunctions needed to build the analytical solutions.
 Section~\ref{sec:examples} contains examples of solvable quadratic ODEs and their solutions.
 Section~\ref{sec:discussion} compares our method to other Koopman methods and discusses potential extensions of this approach.
 Section~\ref{sec:conclusions} concludes the manuscript.

\section{Koopman approach to ODEs and conversion to algebraic systems}\label{sec:eigfunc}

We will take a Koopman approach to solving polynomial ODEs by imposing a rational form on the eigenfunctions, extracting algebraic equations for the parameters, and then solving the algebraic equations to determine the eigenfunction parameters.

\subsection{Background: Koopman theory}

\paragraph*{The Koopman operator.}
Koopman operator theory states that any system of nonlinear ODEs can be mapped to a space of observables with linear dynamics.  
Consider the ODE
$$\frac{d}{dt}\bm{x}(t) = \bm{F}(\bm{x}(t)), \ \ \bm{x} \in \mathds{R}^n,$$
where $\bm{F}: \mathds{R}^n \to \mathds{R}^n$ is an autonomous vector field that acts on the state vector $\bm{x}$.  Let $\bm{x}(t, \bm{x}_0)$ be the flow from initial condition $\bm{x}_0$.
For all $t$, the Koopman operator describes the dynamics of \textit{observables} of the state vector along its flow \cite{koopmanHamiltonianSystemsTransformation1931, budisicAppliedKoopmanism2012, mezicSpectralPropertiesDynamical2005}. Each observable $g: \mathds{R}^n \to \mathds{C}$ is an element of a space of observable functions $\mathcal{F}$.  The Koopman operator $\mathcal{K}_t: \mathcal{F} \to \mathcal{F}$ is an infinite-dimensional linear operator that propagates observables $g$ of the state vector $\bm{x}$ forward in time along trajectories of the ODE,
$$\mathcal{K}_t[g](\bm{x}_0) = g \circ \bm{x}(t, \bm{x}_0).$$
The left-hand side of this equation states that the Koopman operator $\mathcal{K}_t$ pushes the observables $g$ of the initial state vector $\bm{x}_0$ forward in time $t$.  
The right-hand side states that the initial state vector $\bm{x}_0$ is pushed forward in time $t$ according to the flow and then observed by $g$.  

\smallskip

\paragraph*{Eigenfunctions.}
Eigenfunctions $\varphi \in \mathcal{F}$ of the Koopman operator are special observables \cite{bolltGeometricConsiderationsGood2021}.  An eigenvalue-eigenfunction pair $(\lambda, \varphi)$ of $\mathcal{K}_t$ satisfies the equation
$$\mathcal{K}_t [\varphi](\bm{x}_0) = \varphi(\bm{x}_0) e^{\lambda t}.$$
Eigenfunctions have linear dynamics and satisfy the equation 
$$\frac{d}{dt}\varphi(\bm{x}(t)) = \lambda \varphi(\bm{x}(t)).$$
Eigenfunctions are highly useful yet challenging to acquire.
Data-driven methods such as \textit{extended dynamic mode decomposition} (EDMD) can extract Koopman eigenfunction approximations which are useful for both understanding the dynamics of the original system and for applications such as control \cite{williamsDataDrivenApproximation2015, kutzDynamicModeDecomposition2016, arbabiErgodicTheoryDynamic2017,williamsKernelbasedMethodDatadriven2015}. However, due to the form and approximate nature of these data-driven representations, eigenfunction approximations have not yet been helpful for obtaining closed-form analytical solutions to the original ODE.

For select systems, carefully chosen Koopman observables form an invariant subspace resulting in an exact, finite-dimensional linear representation of the nonlinear dynamics \cite{bruntonKoopmanInvariantSubspaces2016, haseliLearningKoopmanEigenfunctions2022}.  The Koopman eigenfunctions of these invariant subspaces have a closed-form.
If eigenfunctions have a closed-form, they can be used to construct analytical solutions for the original ODE given that the eigenfunctions form a large enough independent set.
In previous work we developed a technique to construct closed-form eigenfunctions for certain nonlinear ODEs using the system's invariant manifolds \cite{morrisonSolvingNonlinearOrdinary2024}. 
We now turn to computational algebraic geometry to find closed-form eigenfunctions for a broader class of ODEs.

We aim to identify all quadratic polynomial ODEs that can be solved using closed-form linear rational eigenfunctions.  After determining ODEs that possess qualities necessary to extract closed-form eigenfunctions, we  derive formulas for these eigenfunctions and subsequently construct analytical solutions to the ODEs.

\subsection{2D Quadratic ODEs and the eigenfunction equation}

Consider the system of ordinary differential equations $\frac{d}{dt}\bm{x}(t)= \bm{F}(\bm{x}(t))$ where $\bm{F}$ is a quadratic multivariate polynomial in variables $\bm{x}(t) \in \mathds{R}^2$. 
We will, in the interest of brevity, from here on suppress the dependence of $\bm{x}$ on $t$ in our notation and write $\frac{d}{dt}\bm{x}= \bm{F}(\bm{x})$.  In component form, the ODE system can be expressed as
\begin{align}
\begin{split}\label{eq:ODE_gen}
    \frac{dx}{dt} &= p_1(x,y) =  a_1 x + a_2 y + a_3 x^2 + a_4 xy + a_5 y^2\\
    \frac{dy}{dt} &= p_2(x,y) =  b_1 x + b_2 y + b_3 x^2 + b_4 xy + b_5 y^2
    \end{split}
\end{align}
with variables $(x, y)$ and real parameters, $\bm{a} = [a_1, a_2, ..., a_5] \in \mathds{R}^5$ and $\bm{b} = [b_1, b_2,..., b_5] \in \mathds{R}^5$.  We define this to be the normal form for quadratic ODEs --- without added constants $a_0$ and $b_0$.  Quadratic ODEs with added constants can be converted to this normal form using a change of variables (see Appendix~\ref{sec:normal_form}).

We ask the following question: what subset of quadratic ordinary differential equations, Eq.~\ref{eq:ODE_gen}, have linear rational eigenfunctions?  In other words, what relationship must exist among the parameters $\bm{a}$ and $\bm{b}$ to guarantee an eigenfunction of the form
\begin{align}\label{eq:eigenfunction}
    \varphi(x,y) &= \frac{c_0 + c_1 x + c_2 y}{d_0 + d_1 x + d_2 y}
\end{align}
where $\bm{c} = [c_0, c_1, c_2] \in \mathds{C}^3$ and $\bm{d} = [d_0, d_1, d_2] \in \mathds{C}^3$.
Two independent eigenfunctions are required to construct a solution to the ODE, Eq.~\ref{eq:ODE_gen}; we therefore wish to identify ODEs that possess at least two independent linear rational eigenfunctions \cite{morrisonSolvingNonlinearOrdinary2024}.  
Identification of such ODEs can be done by first imposing conditions on the coefficients so that a nontrivial solution exists, and then extracting those points that are singular, which would  correspond to at least two independent solutions. 
As we will see below, the conditions are naturally described as a system of polynomial equations, therefore these steps can be done by exact computations using  methods from computational algebraic geometry.

\smallskip

\paragraph*{Eigenfunction equation.}
An eigenfunction, $\varphi: \mathds R^n \to \mathds C$ of Eq.~\ref{eq:ODE_gen} is a function that satisfies
\begin{align*}
    \frac{d}{dt}\varphi(\bm{x}(t)) &= \lambda \varphi(\bm{x}(t)) 
\end{align*}
for some $\lambda \in \mathds{C}$.  
Expanding the left-hand side and using ODE (\ref{eq:ODE_gen}) produces
\begin{align}\label{eq:dyn_eig}
    \nabla_{\bm{x}} \varphi(\bm{x}) \cdot \frac{d\bm{x}}{dt} = \nabla_{\bm{x}} \varphi(\bm{x}) \cdot \bm{F}(\bm{x}) &=  \lambda \varphi(\bm{x}).
\end{align}
Restricting the ODE dynamics to a quadratic form (Eq.~\ref{eq:ODE_gen}), yields
\begin{align}\label{eq:dyn_wpoly}
\frac{\partial \varphi}{\partial x} (a_1 x + a_2 y + a_3 x^2 + a_4 xy + a_5 y^2) + \frac{\partial \varphi}{\partial y} (b_1 x + b_2 y + b_3 x^2 + b_4 xy + b_5 y^2)  &= \lambda \varphi
\end{align}

Without boundary or initial conditions, a large variety of possible solutions exists.  Given, however, that our goal is to use the eigenfunctions to construct analytical solutions for the original ODE, we wish to identify closed-form eigenfunction solutions $\varphi$ that are as simple as possible and amenable to the construction of ODE solutions.

\subsection{Eigenfunctions with a rational form}

In previous work \cite{morrisonSolvingNonlinearOrdinary2024}, we identified that certain polynomial ODEs have exact eigenfunctions of a rational form as is shown in the following example. 

\begin{tcolorbox}[colback=gray!5,colframe=blue!60!purple,title=Example: ODE with closed-form rational eigenfunctions]
Using the invariant manifold method introduced in Ref.\cite{morrisonSolvingNonlinearOrdinary2024}, we found that the nonlinear system of ODEs
\begin{equation}
\begin{split}\label{eq:intro_ode_ex}
    \dot{x} &= xy\\
    \dot{y} &= y^2 - x - 1
    \end{split}
\end{equation}
has eigenvalue/eigenfunction pairs
\begin{align*}
    \Big( \lambda_1 = 1, \quad \varphi_1(x,y) = \frac{x}{1+x+y} \Big ), \quad \text{and} \quad
    \Big(\lambda_2 = -1, \quad \varphi_2(x,y) = \frac{x}{1+x-y} \Big ).
\end{align*}
Using these exact eigenfunctions, we constructed exact analytical solutions to Eq.~\ref{eq:intro_ode_ex}:
\begin{align}
    \begin{split}
    x(t; x_0, y_0) &= \frac{2x_0 e^t}{1+x_0-2x_0 e^t + e^{2t}(1+x_0 - y_0) + y_0}\\
     y(t; x_0, y_0) &= \frac{1 + x_0 + y_0 + e^{2t}(-1 - x_0 + y_0)}{1+ x_0 - 2 x_0 e^t + e^{2t}(1 + x_0 - y_0) + y_0}
    \end{split}
\end{align}
where $(x_0, y_0)$ is the initial condition.
\end{tcolorbox} Two independent eigenfunctions (eigenfunctions that do not belong to the same equivalence class \cite{bolltGeometricConsiderationsGood2021, morrisonSolvingNonlinearOrdinary2024}) were required to obtain the analytical solution, $(x(t), y(t))$. 
The invariant manifold method takes closed-form representations of invariant manifolds in the system and uses them for the numerator and denominator of rational eigenfunction candidates \cite{morrisonSolvingNonlinearOrdinary2024}.  This method is effective at finding eigenfunctions only in select cases; it is highly restrictive and cannot be generally applied to the vast majority of systems of polynomial ODEs.

The resulting rational forms that emerge raise the question: What is the complete set of polynomial ordinary differential equations that have rational eigenfunctions?  Furthermore, can rational eigenfunctions be computed directly from the parameters that define an ODE? 
We answer these questions for the more restrictive set of \textit{quadratic} two-dimensional ODEs with \textit{linear} rational eigenfunctions.

\smallskip

\paragraph*{Restrict eigenfunctions to a linear rational form in the eigenfunction equation.}
We impose a rational form on the eigenfunctions,
\begin{align*}
    \varphi(\bm{x}) = \frac{P(\bm{x})}{Q(\bm{x})}
\end{align*}
where $P(\bm{x}), Q(\bm{x}) \in \mathcal{P}_1(\mathds{R}^2)$ are multivariate polynomials with an imposed finite degree $\leq 1$,
\begin{align*}
    P(x,y) &= c_0 + c_1 x + c_2 y, \\
    Q(x,y) &= d_0 + d_1 x + d_2 y.
\end{align*}
The eigenfunction coefficients can be complex-valued, $\bm{c} = [c_0, c_1, c_2] \in \mathds{C}^3$ and $\bm{d} = [d_0, d_1, d_2] \in \mathds{C}^3$.
The partial derivative of eigenfunction $\varphi(\bm{x})$ with respect to $x_i$ is
\begin{align*}
    \frac{\partial}{\partial x_i}\varphi(\bm{x}) = \frac{Q(\bm{x}) \frac{\partial}{\partial x_i}P(\bm{x}) - P(\bm{x}) \frac{\partial}{\partial x_i}Q(\bm{x})}{Q^2(\bm{x})}.
\end{align*}
With the imposed restrictions, the partial derivatives are
\begin{align*}
    \frac{\partial \varphi}{\partial x} &= \frac{-d_1(c_0+c_2 y)+c_1(d_0+d_2 y)}{(d_0+d_1 x+d_2 y)^2} \\
    \frac{\partial \varphi}{\partial y} &= \frac{-d_2(c_0+c_1 x)+c_2(d_0+d_1 x)}{(d_0+d_1 x+d_2 y)^2}.
\end{align*}

Inserting the rational form for the eigenfunction and its partial derivatives into Eq.~\ref{eq:dyn_wpoly} and moving all terms to the left-hand side produces 
\begin{align}\label{eq:dyn_wpoly_and_rational}
    \begin{split}
\Omega(x,y) :=& \lambda \varphi(x,y) Q^2(x,y)\\
&-  \left( Q(x,y) \frac{\partial}{\partial x}P(x,y) - P(x,y) \frac{\partial}{\partial x}Q(x,y) \right) p_1(x,y) \\
 &-  \left( Q(x,y) \frac{\partial}{\partial y}P(x,y) - P(x,y) \frac{\partial}{\partial y}Q(x,y) \right) p_2(x,y) 
=  0.
    \end{split}
\end{align}
$\Omega(x,y)$ is a multivariate polynomial that is identically equal to zero,
\small
\begin{align*}
\Omega(x,y) :=
-c_0d_0\lambda &\\
+ (a_1c_1d_0+b_1c_2d_0-a_1c_0d_1-b_1c_0d_2-c_1d_0 \lambda-c_0d_1 \lambda) & x \\
+ (a_2c_1d_0+b_2c_2d_0-a_2c_0d_1-b_2c_0d_2-c_2d_0 \lambda-c_0d_2 \lambda) & y\\
 + (a_3c_1d_0+b_3c_2d_0-a_3c_0d_1+b_1c_2d_1-b_3c_0d_2-b_1c_1d_2-c_1d_1 \lambda) & x^2 \\
  + (a_4c_1d_0+b_4c_2d_0-a_4c_0d_1-a_1c_2d_1+b_2c_2d_1-b_4c_0d_2+a_1c_1d_2-b_2c_1d_2-c_2d_1 \lambda -c_1d_2 \lambda) & x y \\
+ (a_5c_1d_0+b_5c_2d_0-a_5c_0d_1-a_2c_2d_1-b_5c_0d_2+a_2c_1d_2-c_2d_2\lambda) & y^2 \\
 + (b_3c_2d_1-b_3c_1d_2) & x^3 \\
  + (-a_3c_2d_1+b_4c_2d_1+a_3c_1d_2-b_4c_1d_2) & x^2 y \\
 + (-a_4c_2d_1+b_5c_2d_1+a_4c_1d_2-b_5c_1d_2) & x y^2 \\
 + (-a_5c_2d_1+a_5c_1d_2) & y^3 \\
  =& 0.
\end{align*}
\normalsize

\paragraph{Extract algebraic system defining relationship between parameters.}
$\Omega(x,y)$ is identically equal to zero for eigenfunction/eigenvalue pairs of Eq.~\ref{eq:ODE_gen}. 
A polynomial is identically equal to zero if and only if all of its coefficients are zero.  Setting the coefficients of each term $x^n y^m$ of $\Omega(x,y)$ equal to zero produces the system of equations,
\small
\begin{align}\label{eq:system for rational eigenfunctions}
\begin{split} 
    -c_0d_0\lambda & = 0\\
 (a_2c_1d_0+b_2c_2d_0-a_2c_0d_1-b_2c_0d_2-c_2d_0 \lambda-c_0d_2 \lambda) & =0\\
(a_5c_1d_0+b_5c_2d_0-a_5c_0d_1-a_2c_2d_1-b_5c_0d_2+a_2c_1d_2-c_2d_2\lambda) & =0 \\
  (-a_5c_2d_1+a_5c_1d_2) & =0\\
 (a_1c_1d_0+b_1c_2d_0-a_1c_0d_1-b_1c_0d_2-c_1d_0 \lambda-c_0d_1 \lambda) & =0 \\
  (a_4c_1d_0+b_4c_2d_0-a_4c_0d_1-a_1c_2d_1+b_2c_2d_1-b_4c_0d_2+a_1c_1d_2-b_2c_1d_2-c_2d_1 \lambda -c_1d_2 \lambda) & =0 \\
 (-a_4c_2d_1+b_5c_2d_1+a_4c_1d_2-b_5c_1d_2) & =0 \\
  (a_3c_1d_0+b_3c_2d_0-a_3c_0d_1+b_1c_2d_1-b_3c_0d_2-b_1c_1d_2-c_1d_1 \lambda) & =0 \\
  (-a_3c_2d_1+b_4c_2d_1+a_3c_1d_2-b_4c_1d_2) & =0 \\
  (b_3c_2d_1-b_3c_1d_2) & =0.
  \end{split}
\end{align}
\normalsize

Let us call this system, Eqs.~\ref{eq:system for rational eigenfunctions}, $\bm{H}(\bm{a}, \bm{b}, \bm{c}, \bm{d}, \lambda) = \bm{0}$.
This system must be satisfied for $(\lambda, \, \varphi(x,y))$ to be an eigenvalue/eigenfunction pair of Eq.~\ref{eq:ODE_gen}.

We will use tools from nonlinear algebra to study the polynomial system, Eqs.~\ref{eq:system for rational eigenfunctions}, to (1) identify the necessary requirements for a linear rational eigenfunction solution to exist and (2) derive formulas for eigenfunction parameters in settings where a solution does exist.

Our objectives are the following:
\begin{enumerate}
    \item Find the set of ODE parameters $\bm{a}$ and $\bm{b}$
    for which there is at least one nontrivial solution to the system of equations \ref{eq:system for rational eigenfunctions} for some eigenfunction parameters $\bm{c}$ and $\bm{d}$ with some eigenvalue $\lambda$. 
    \item Find the subset of ODE parameters $\bm{a}$ and $\bm{b}$ for which the system of equations \ref{eq:system for rational eigenfunctions} is satisfied by at least two different sets of eigenfunction parameters and eigenvalues that correspond to at least two independent eigenfunctions.  
    \item Find formulas for the eigenfunction parameters in terms of the ODE parameters, given that the ODE parameters belong to the subset where two independent linear rational eigenfunction solutions exist. 
\end{enumerate}

The system of equations $\bm{H}(\bm{a}, \bm{b}, \bm{c}, \bm{d}, \lambda) = \bm{0}$, Eq.~\ref{eq:system for rational eigenfunctions}, is a nonlinear polynomial system in the unknown ODE parameters. Given the equation structure, it likely has a high-dimensional complex solution set.  To solve the system, we take a methodical approach that is well-defined in computational algebraic geometry. We will see that the set of nontrivial solutions can be decomposed as a union of  linear space and a  high-dimensional set, both infinite, whose explicit descriptions we derive. The precise approach for constructing such analytical solutions is described in Section~\ref{sec:alg_geo} and in more detail in Appendix~\ref{sec:alg_geo_appx}. 

Once we have found the subset of quadratic ODEs that possess two independent linear rational eigenfunctions and have computed the eigenfunction parameter values, we can use the obtained eigenfunctions and eigenvalues to construct a solution to the original system, as is presented in Section~\ref{sec:solve_ode}.



\section{Solving quadratic ODEs using independent linear rational eigenfunctions}\label{sec:solve_ode}

Once $n$ independent eigenfunctions are determined for an $n-$dimensional ODE, we can construct analytical solutions for the ODE with the following steps, which we initially formulated in \cite{morrisonSolvingNonlinearOrdinary2024}: 

\begin{enumerate}[noitemsep]
    \item Construct the eigenfunction solutions: $\varphi_j(\bm{x}(t,\bm{x}_0)) = \varphi_j(\bm{x}_0) e^{\lambda_j t}$.
    \item Solve for the original variables as functions of the eigenfunctions: $\bm{x} = \bm{G}(\bm{\varphi})$ where $\bm{\varphi} = (\varphi_1,...,\varphi_n)$.
    \item Substitute the eigenfunction solutions into $\bm{G}$ to construct the analytical solutions:\\ $\bm{x}(t,\bm{x}_0) = \bm{G}(\bm{\varphi}(t, \bm{x}_0))$ where 
    $\bm{\varphi}(t, \bm{x}_0) = (\varphi_1(\bm{x}_0) e^{\lambda_1 t},...,\varphi_n(\bm{x}_0) e^{\lambda_n t})$.
\end{enumerate}

\smallskip

Suppose that we have obtained two independent, nontrivial eigenfunctions for the ODE Eq.~\ref{eq:ODE_gen}, 
\begin{align}
\begin{split}\label{eq:phi1_phi2_gen}
    \varphi_1(x,y) &= \frac{c_0 + c_1 x + c_2 y}{d_0 + d_1 x + d_2 y} \\
    \varphi_2(x,y) &= \frac{k_0 + k_1 x + k_2 y}{m_0 + m_1 x + m_2 y}
\end{split}
\end{align}
where $c_i, d_i, k_i, m_i \in \mathds{C}$ and $\lambda_1, \lambda_2 \in \mathds{C}$ are the corresponding eigenvalues.

Solving the system Eqs.~\ref{eq:phi1_phi2_gen} for $x$ and $y$ as functions of $\varphi_1$ and $\varphi_2$ produces
\begin{align}
\begin{split}\label{eq:x_y_gen}
    x(\varphi_1, \varphi_2) &= -\frac{(c_2 - d_2 \varphi_1)(k_0 - m_0 \varphi_2) - (c_0 - d_0 \varphi_1)(k_2 - m_2 \varphi_2)}{(c_2 - d_2\varphi_1)(k_1 - m_1 \varphi_2) - (c_1 - d_1 \varphi_1)(k_2 - m_2 \varphi_2)} \\
    y(\varphi_1, \varphi_2) &= \frac{(c_1 - d_1 \varphi_1) ( k_0 -  m_0 \varphi_2) - (c_0    - d_0  \varphi_1 )(k_1  - m_1 \varphi_2) }{(c_2   - d_2  \varphi_1 )(k_1  - m_1 \varphi_2)  -(c_1  - d_1 \varphi_1 )( k_2  - m_2 \varphi_2 )}.
\end{split}
\end{align}
Eqs.~\ref{eq:x_y_gen} give the solution to the nonlinear ordinary differential equation in terms of the linear rational eigenfunctions, whose dynamics are easily solved.
Note that if $\varphi_2(x,y) = \alpha \varphi_1(x,y)$ or $\varphi_2(x,y) = \alpha/\varphi_1(x,y)$ for some $\alpha \in \mathds{C}$, the eigenfunctions are not independent \cite{morrisonSolvingNonlinearOrdinary2024, bolltGeometricConsiderationsGood2021}.

The dynamics of the eigenfunctions are linear by construction (Eq.~\ref{eq:dyn_eig}), and therefore have the following solutions as a function of time:
\begin{align}
\begin{split}\label{eq:phi1t_phi2t}
    \varphi_1(t) &= \varphi_1(x_0, y_0) e^{\lambda_1 t} \\
    \varphi_2(t) &= \varphi_2(x_0, y_0) e^{\lambda_2 t}
\end{split}
\end{align}
where initial conditions $\varphi_1(0) = \varphi_1(x_0, y_0)$ and $\varphi_2(0)=\varphi_2(x_0, y_0)$ are the eigenfunctions, Eq.~\ref{eq:phi1_phi2_gen} evaluated at the initial condition in the original state space $(x_0, y_0) = (x(0), y(0))$. Substituting the eigenfunction solutions \ref{eq:phi1t_phi2t} into the equations for $x$ and $y$, Eq.~\ref{eq:x_y_gen}, produces the analytical solution to Eq.~\ref{eq:ODE_gen} for the initial condition $(x_0, y_0)$,
\begin{align}
\begin{split}\label{eq:xt_yt}
    x(t) &= -\frac{(c_2 - d_2 \varphi_1(0) e^{\lambda_1 t} )(k_0 - m_0 \varphi_2(0) e^{\lambda_2 t}) - (c_0 - d_0 \varphi_1(0) e^{\lambda_1 t} )(k_2 - m_2 \varphi_2(0) e^{\lambda_2 t})}{(c_2 - d_2 \varphi_1(0) e^{\lambda_1 t} )(k_1 - m_1 \varphi_2(0) e^{\lambda_2 t}) - (c_1 - d_1 \varphi_1(0) e^{\lambda_1 t} )(k_2 - m_2 \varphi_2(0) e^{\lambda_2 t})}, \\
    y(t) &= \frac{(c_1 - d_1 \varphi_1(0) e^{\lambda_1 t} ) ( k_0 -  m_0 \varphi_2(0) e^{\lambda_2 t}) - (c_0    - d_0  \varphi_1(0) e^{\lambda_1 t}  )(k_1  - m_1 \varphi_2(0) e^{\lambda_2 t}) }{(c_2   - d_2  \varphi_1(0) e^{\lambda_1 t}  )(k_1  - m_1 \varphi_2(0) e^{\lambda_2 t})  -(c_1  - d_1 \varphi_1(0) e^{\lambda_1 t}  )( k_2  - m_2 \varphi_2(0) e^{\lambda_2 t} )}.
\end{split}
\end{align}

%
The most difficult step in this process is finding suitable eigenfunctions.  Once independent eigenfunctions are found, the computations for the remainder of the steps are straightforward.

\section{Identifying solvable quadratic ODEs using nonlinear algebra}
\label{sec:alg_geo}

Now that we have established how eigenfunctions are used to construct solutions to ODEs, we turn our attention back to Objectives (1-3): finding nontrivial linear rational eigenfunctions for quadratic ODEs, or equivalently, finding nontrivial solutions to the system of equations $\bm{H}(\bm{a}, \bm{b}, \bm{c}, \bm{d}, \lambda) = \bm{0}$, Eqs.~\ref{eq:system for rational eigenfunctions}.
These equations mix the ODE’s coefficients with the parameters of the eigenfunctions we hope to find. Because this system is nonlinear and highly coupled, solving it directly by hand is not practical. Instead, we use tools from nonlinear algebra, which provide systematic, exact ways to detect when such systems have meaningful (nontrivial) solutions.

The equations derived from \eqref{eq:dyn_eig} always include \emph{trivial solutions} such as
zero eigenfunctions or zero eigenvalues. These do not yield useful Koopman eigenfunctions and
are therefore removed automatically in the symbolic filtering process. Retaining only the
nontrivial cases is essential because many ODE coefficient choices satisfy the trivial
conditions but produce no usable eigenfunctions. 

To organize this process, we consider the set of all ODE coefficient vectors, $\bm{a}$ and $\bm{b}$, for which the
polynomial system $\bm{H}(\bm{a}, \bm{b}, \bm{c}, \bm{d}, \lambda) = \bm{0}$ admits a \emph{nontrivial} linear rational eigenfunction. We refer to this
set as the \emph{rational eigenfunction variety}; see Definition~\ref{defn:the rational eigenfunction variety}.  
Conceptually, this variety is the region in coefficient space where at least one nontrivial eigenfunction exists. Appendix~\ref{sec:alg_geo_appx} explains how  this set is computed symbolically and how trivial solutions are removed. 

The symbolic computation for the defining equations of the rational eigenfunction variety yields a remarkable classification. Every quadratic ODE in
the normal form~\eqref{eq:ODE_gen} that admits at least one nontrivial linear rational eigenfunction belongs to one of two families:
\begin{itemize}
    \item {Linear family $\mathcal L$}:
    a six-dimensional linear space of coefficients satisfying four linear relations.
    \item {Nonlinear family $\mathcal Q$}:
    a six-dimensional nonlinear space described by nine quadratic relations.
\end{itemize}

\subsection{Singularities in the eigenfunction variety}
    For ODEs in~$\mathcal L$, Section~\ref{subsec:space_L} provides closed-form formulas for     two independent eigenfunctions. 
However, ODEs corresponding to $\mathcal Q$ do not always admit two independent linear rational eigenfunctions. For this \emph{nonlinear} $8$-dimensional subset of $\mathbb C^{10}$, that is, the set of parameters $(\bm{a}, \bm{b})$ that satisfies $\bm{R}(\bm{a}, \bm{b})= \bm{0}$, we consider what it means to have eigenfunction parameter solutions $(\bm{c}, \bm{d}, \lambda)$ of $\bm{H}(\bm{a}, \bm{b},\bm{c}, \bm{d}, \lambda)= \bm{0}$ that correspond to two independent eigenfunction solutions. Consider Fig.~\ref{fig:sings} which shows, on the left, a schematic of the nonlinear variety $\mathcal Q$, points $(\bm{a}, \bm{b}) \in \mathds{R}^{10}$ defined by $\bm{R}(\bm{a}, \bm{b})= \bm{0}$, and on the right, the corresponding points in the space of all parameters $(\bm{a}, \bm{b}, \bm{c}, \bm{d}, \lambda) \in \mathds{C}^{17}$ defined by $\bm{H}(\bm{a}, \bm{b},\bm{c}, \bm{d}, \lambda)= \bm{0}$. 

By definition, for each point $(\bm{a}, \bm{b}) \in \mathds{R}^{10}$ such that $\bm{R}(\bm{a}, \bm{b})= \bm{0}$, there exists at least one corresponding point $(\bm{c}, \bm{d}, \lambda) \in \mathds{C}^7$ such that $\bm{H}(\bm{a}, \bm{b},\bm{c}, \bm{d}, \lambda)= \bm{0}$. 
For some points $(\bm{a}, \bm{b})$ in $\mathcal{Q}$ there exist at least two sets of solutions $(\bm{c}, \bm{d}, \lambda_1)$ and $(\bm{k}, \bm{m}, \lambda_2)$ where $\bm{H}(\bm{a}, \bm{b},\bm{c}, \bm{d}, \lambda_1)= \bm{0}$ and $\bm{H}(\bm{a}, \bm{b},\bm{k}, \bm{m}, \lambda_2)= \bm{0}$ and the points $(\bm{c}, \bm{d}, \lambda_1)$ and $(\bm{k}, \bm{m}, \lambda_2)$ are the parameters for two independent eigenfunctions.

The points on the variety $\mathcal Q$ correspond to ODEs that have at least one linear rational eigenfunction but \textit{two} independent eigenfunctions of this form is not guaranteed and two independent eigenfunctions are necessary to perform the remaining steps and solve the ODE. The points on the variety that lend themselves to 
two independent linear rational eigenfunctions correspond to singularities of the variety, which are described by $\bm{X}(\bm{a}, \bm{b}) = \bm{0}$.  The reasoning is as follows: $\bm{R}(\bm{a}, \bm{b})=\bm{0}$ forms a smooth continuous manifold 
except at points where the Jacobian of $\bm{R}$ drops rank. Such points are, by definition, singularities of the variety defined by $\bm{X}(\bm{a}, \bm{b}) = \bm{0}$.
In Fig.~\ref{fig:sings}(a) we select an arbitrary singular point $(\bm{a}^*, \bm{b}^*)$  on $\bm{X}(\bm{a}, \bm{b})=\bm{0}$ along which the tangent space of $\bm{R}(\bm{a}, \bm{b}) = \bm{0}$ has a higher than expected dimension.
Multiple branches of the variety $\mathcal Q$ intersect at this point. We pick two of these branches and select a point near the singular point on each branch: let $(\bm{a}^{(1)}, \bm{b}^{(1)})$ be on the first branch and let $(\bm{a}^{(2)}, \bm{b}^{(2)})$ be on the second branch. Let the variety have a well defined tangent space at these points.  Now we define a simple smooth continuous path from $(\bm{a}^{(1)}, \bm{b}^{(1)})$ to $(\bm{a}^{*}, \bm{b}^{*})$ along $\bm{R}(\bm{a}, \bm{b})= \bm{0}$ which we call curve 1 and another simple smooth continuous path from $(\bm{a}^{(2)}, \bm{b}^{(2)})$ to $(\bm{a}^{*}, \bm{b}^{*})$ along $\bm{R}(\bm{a}, \bm{b})= \bm{0}$ which we call curve 2 (Fig.~\ref{fig:sings}(a)).
By definition, there exist at least two points $(\bm{c}^{(1)}, \bm{d}^{(1)}, \lambda^{(1)})$ and $(\bm{c}^{(2)}, \bm{d}^{(2)}, \lambda^{(2)})$ such that $\bm{H}(\bm{a}^{(1)}, \bm{b}^{(1)},\bm{c}^{(1)}, \bm{d}^{(1)}, \lambda^{(1)})= \bm{0}$ and $\bm{H}(\bm{a}^{(2)}, \bm{b}^{(2)},\bm{c}^{(2)}, \bm{d}^{(2)}, \lambda^{(2)})= \bm{0}$. These two points most likely correspond to two independent eigenfunctions. Curves 1 and 2 map to corresponding curves in the ODE and eigenfunction parameter space (Fig.~\ref{fig:sings}(b)).
As we move along the curves corresponding to curve 1 and curve 2 in the ODE and eigenfunction space $\bm{H}(\bm{a}, \bm{b},\bm{c}, \bm{d}, \lambda)= \bm{0}$, we find that the ODE parameters approach $(\bm{a}^*, \bm{b}^*)$ along both curves, but there is no guarantee that the eigenfunction parameters will approach the same values.  If
$$\lim_{\underset{\text{along curve 1}}{(\mathbf{a}, \mathbf{b}) \longrightarrow (\mathbf{a}^*, \mathbf{b}^*)}} (\mathbf{a}, \mathbf{b}, \mathbf{c}, \mathbf{d}, \lambda) \neq \lim_{\underset{\text{along curve 2}}{(\mathbf{a}, \mathbf{b}) \longrightarrow (\mathbf{a}^*, \mathbf{b}^*)}} (\mathbf{a}, \mathbf{b}, \mathbf{c}, \mathbf{d}, \lambda)$$

then we will achieve two sets of eigenfunction parameter solutions for a single ODE parameter point $(\bm{a}^*, \bm{b}^*)$ and can use these to construct two independent eigenfunctions.
Fig.~\ref{fig:sings}(b) shows with a cartoon how the two curves mapped to the ODE and eigenfunction space are not guaranteed to approach the same point and therefore should yield two independent eigenfunctions.
For ODEs corresponding to the points in $\mathcal X$, Section~\ref{subsec:space_X} provides explicitly two independent linear rational eigenfunctions.

\begin{figure}[th!]
    \centering
    \includegraphics[width=1.0\textwidth]{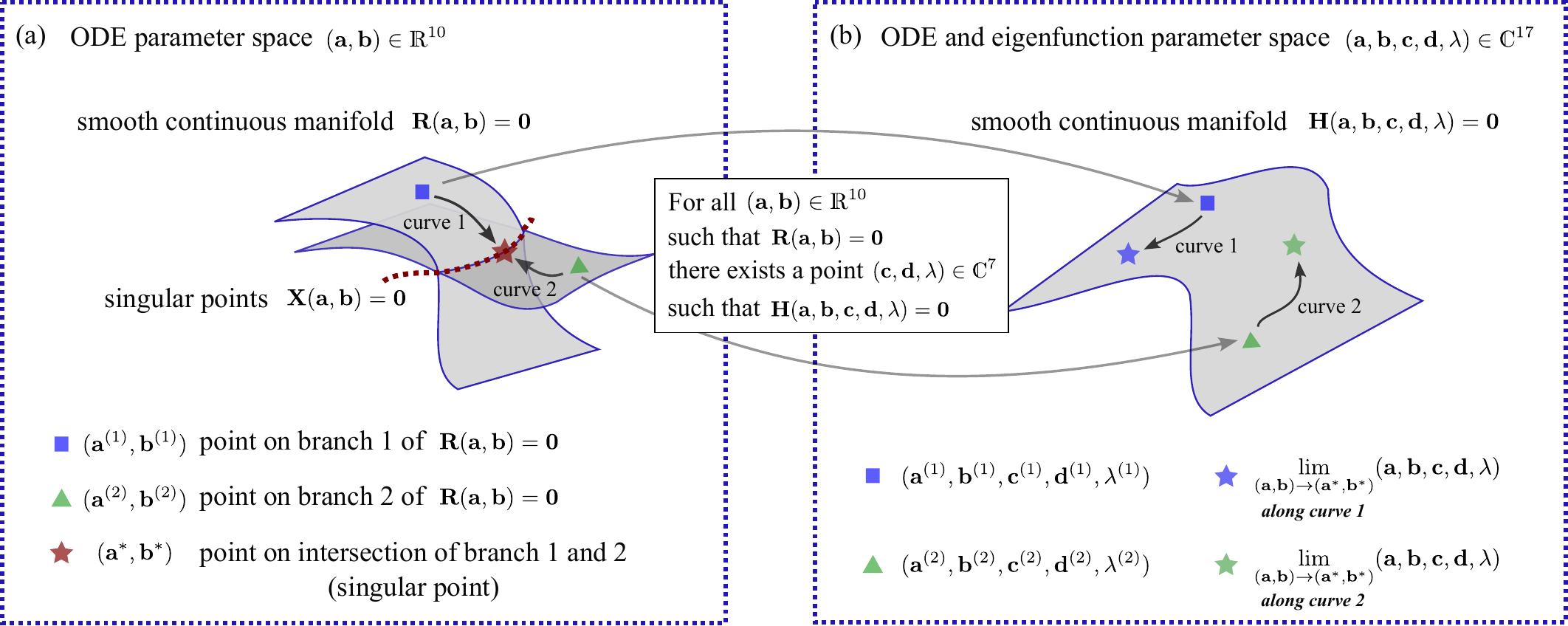}
    \caption{(a). Singularities of $\bm{R}(\bm{a}, \bm{b})=\bm{0}$ are described by 
    $\bm{X}(\bm{a}, \bm{b}) = \bm{0}$. (b) Points on $\bm{R}(\bm{a}, \bm{b})= \bm{0}$ map to points on $\bm{H}(\bm{a}, \bm{b}, \bm{c}, \bm{d}, \lambda) = \bm{0}$. Points along $\bm{X}(\bm{a}, \bm{b}) = \bm{0}$ can map to two sets of eigenfunction parameter values.}
    \label{fig:sings}
\end{figure}

The result therefore is a subspace $\mathcal{V} = \mathcal{L} \cup \mathcal{X}$ that contains all sets of Eq.~\ref{eq:ODE_gen} parameters 
$$\{a_1, a_2, a_3, a_4, a_5, b_1, b_2, b_3, b_4, b_5\}$$ for which we are guaranteed two independent linear rational eigenfunctions.
Thus, the question ``Which quadratic ODEs are solvable by the eigenfunction method using linear rational eigenfunctions?'' has a clear answer: \emph{exactly those whose coefficients lie in $\mathcal L$ or $\mathcal X$}.   These families are the two components extracted from the rational eigenfunction variety. 


\section{Solvable quadratic ODEs and their linear rational eigenfunctions}
\label{sec:solvable_ODEs_eig_params}

There are two spaces, $\mathcal L$ and $\mathcal X$, where Eq.~\ref{eq:ODE_gen} has independent linear rational eigenfunctions. 

$\mathcal L$ is a $6$-dimensional \emph{linear} space defined by the $4$ linear polynomials\footnote{Meaning, the constraints are obtained by setting these linear polynomials to $=0$.} 
\[ b_3,\quad a_5,\quad a_4-b_5,\quad a_3-b_4.
\]

$\mathcal X$ is a nonlinear space defined by the quadratic polynomials\footnote{Meaning, the constraints are obtained by setting these polynomials to $=0$.} 
\begin{align*}
     4a_5b_3-a_4b_4,            \\
 2a_4b_3-2a_3b_4+b_4^2-4b_3b_5,  \\
      2a_2b_3-a_1b_4+b_2b_4-2b_1b_5,  \\
      2a_5b_1-a_2b_4,                \\
       a_4b_1-a_1b_4+b_2b_4-2b_1b_5,   \\
      2a_3b_1-2a_1b_3+2b_2b_3-b_1b_4, \\
      a_4^2-4a_3a_5+2a_5b_4-2a_4b_5,  \\
      a_2a_4-2a_1a_5+2a_5b_2-2a_2b_5, \\
      2a_2a_3-a_1a_4+a_4b_2-a_2b_4 .  
\end{align*}

\subsection{Linear space \texorpdfstring{$\mathcal{L}$}{L} ODEs and eigenfunctions}\label{subsec:space_L}

The linear space $\mathcal L$ produces quadratic ODEs of the form  
\begin{align}
\begin{split}\label{eq:ODE_lin}
    \frac{dx}{dt} &= a_1 x + a_2 y + x(b_4 x + a_4 y) \\
    \frac{dy}{dt} &= b_1 x + b_2 y + y( b_4 x + a_4 y) 
    \end{split}
\end{align}
with real parameters, $a_i, b_i \in \mathds{R}$.

Two independent eigenvalue / eigenfunction pairs for this family are
\begin{align*}
    \Big( \lambda_1 &= \frac{a_1 + b_2 - \Delta}{2}, \quad \varphi_1(x,y) = \frac{x + c_2 y}{d_0 + x + d_2 y} \Big) \quad \text{and} \\
    \Big( \lambda_2 &= \Delta, \quad \varphi_2(x,y) = \frac{x + k_2 y}{x + m_2 y} \Big)
\end{align*}
where 
$\Delta = \sqrt{a_1^2+4a_2 b_1 - 2a_1 b_2 +b_2^2}$ 
and the eigenfunction parameter values are

\begin{align*}
&c_2= \frac{-a_1+b_2 - \Delta}{2b_1}, \;
d_0= \frac{a_2 b_1 - a_1 b_2}{a_4 b_1 - b_2 b_4}, \;
d_2= \frac{-a_1 a_4 + a_2 b_4}{a_4 b_1 - b_2 b_4}, \\
&k_2 = \frac{-a_1 + b_2 + \Delta}{2 b_1}, \;
    m_2 = \frac{-a_1 + b_2 - \Delta}{2 b_1}.
\end{align*}
\normalsize


Note that any multiple of an eigenfunction is also an eigenfunction in the same equivalence class \cite{morrisonSolvingNonlinearOrdinary2024},
\begin{align*}
    \varphi_1^{(\alpha)}(x,y) = \alpha \varphi_1(x,y).
\end{align*}
Given that any multiple of an eigenfunction is an eigenfunction in the same equivalent class, we have normalized the two independent eigenfunctions by setting $c_1 = d_1 = 1$ and $k_1 = m_1 = 1$.

Observe that the eigenvalues and eigenfunction parameters are functions of the ODE parameters (the ODE parameters are $\{ a_1, a_2, a_4, b_1, b_2, b_4 \}$); the eigenfunctions are determined directly from the ODE parameters. 
We can use these eigenfunctions to construct solutions to the original system as outlined in Section~\ref{sec:solve_ode}.

\subsection{Nonlinear space \texorpdfstring{$\mathcal{X}$}{X} ODEs and eigenfunctions}\label{subsec:space_X}

The nonlinear space $\mathcal X$ produces quadratic ODEs of the form 
\begin{align}
\begin{split}\label{eq:ODE_nonlin}
    \frac{dx}{dt} &= a_1 x + a_2 y + a_4 xy + \frac{a_1 a_4 -b_2 a_4 +a_2 b_4}{2a_2} x^2 + \frac{a_2 b_4}{2 b_1} y^2\\
    \frac{dy}{dt} &= b_1 x + b_2 y + b_4 xy + \frac{b_1 a_4}{2a_2} x^2 + \frac{b_1 a_4 -a_1 b_4+ b_2 b_4}{2b_1} y^2
\end{split}
\end{align}
with real parameters, $a_i, b_i \in \mathds{R}$.

Two independent eigenvalue / eigenfunction pairs for this family are
\begin{align*}
    \Big( \lambda_1 &= \frac{-a_1-b_2-\Delta}{2}, \quad \varphi_1(x,y) = \frac{c_0 +  x + c_2 y}{ x + d_2 y} \Big) \quad \text{and} \\
    \Big( \lambda_2 &= \frac{-a_1-b_2+\Delta}{2}, \quad \varphi_2(x,y) = \frac{k_0 +  x + k_2 y}{ x + m_2 y} \Big)
\end{align*}
where
$\Delta = \sqrt{a_1^2+4a_2 b_1 - 2a_1 b_2 +b_2^2}$ and the eigenfunction parameter values are
\begin{align*}
c_0&= \frac{ -2 a_2 b_1 a_4 + b_2 a_4 \left( a_1 - b_2 - \Delta \right) + a_2 b_4 \left( a_1 + b_2 + \Delta \right)}
{-b_1 a_4^2 + b_4 \left( a_1 a_4 - b_2 a_4 + a_2 b_4 \right)},\\
c_2  &= d_2 = \frac{ -a_1 + b_2 + \Delta }{2b_1} 
\end{align*}
for the first eigenfunction and
\begin{align*}
    k_0 &= \frac{ -2 a_2 b_1 a_4 + b_2 a_4 \left( a_1 - b_2 + \Delta \right) + a_2 b_4 \left( a_1 + b_2 - \Delta \right) }
{-b_1 a_4^2 + b_4 \left( a_1 a_4 - b_2 a_4 + a_2 b_4 \right)},\\
k_2 &= m_2 = \frac{ -a_1 + b_2 - \Delta  }{2b_1} 
\end{align*}
for the second eigenfunction. Given that any multiple of an eigenfunction is an eigenfunction in the same equivalent class, we normalize the two independent eigenfunctions by setting $c_1 = d_1 = 1$ and $k_1 = m_1 = 1$.

Observe that the eigenvalues and eigenfunction parameters are functions of the ODE parameters (the ODE parameters are $\{ a_1, a_2, a_4, b_1, b_2, b_4 \}$); the eigenfunctions can be determined directly from the ODE parameters. Section~\ref{sec:solve_ode} outlines the method to construct solutions using the eigenfunctions.

\section{Examples}\label{sec:examples}

We now highlight a few ODEs that have parameter values in $\mathcal L \cup X$  and solve the system of nonlinear ODEs using their linear rational eigenfunctions.

\subsection{ODE with parameters in \texorpdfstring{$\mathcal{L}$}{L}}

Consider the ODE,
\begin{align}
\begin{split}\label{eq:L_ex1_ode}
    \frac{dx}{dt} &= x -2y + 2x^2 +xy\\
    \frac{dy}{dt} &= 3x +y +2xy + y^2
    \end{split}
\end{align}
shown in Figure~\ref{fig:L_example1}(a). The parameter values for this ODE, 
$$(a_1, a_2, a_3, a_4, a_5, b_1, b_2, b_3, b_4, b_5) = (1, -2, 2, 1, 0, 3, 1, 0, 2, 1),$$
are in space $\mathcal L$.
Two independent eigenvalues/eigenfunction pairs for Eqs.~\ref{eq:L_ex1_ode} using the formulas from Section~\ref{subsec:space_L}
are 
\begin{align*}
    \lambda_1 = 1 - i\sqrt{6}, \quad \varphi_1(x,y) &= \frac{x-i\sqrt{\frac{2}{3}}y}{-7+x-5y},\\
    \lambda_2 = 2i \sqrt{6}, \quad \varphi_2(x,y) &= \frac{3x + i \sqrt{6}y}{3x - i \sqrt{6}y}.
\end{align*}
Consider the initial condition $(x_0, y_0) = (0, 1)$. The eigenfunction initial conditions are $\varphi_1(x_0, y_0) = \varphi_1(0, 1) = \frac{i}{6 \sqrt{6}}$ and $\varphi_2(x_0, y_0) = \varphi_2(0, 1) = -1$.
The solutions in the eigenfunction space are
$\varphi_1(t) = \frac{i}{6 \sqrt{6}} e^{(1-i\sqrt{6})t}$
and
$\varphi_2(t) = -e^{2i \sqrt{6}t}$ (Fig.~\ref{fig:L_example1}(b)).
Finally, using Eqs.~\ref{eq:x_y_gen} and \ref{eq:xt_yt}, we determine the solution to the initial value problem in the original state space to be
\begin{align}
\begin{split}\label{eq:L_ex1_x_y}
    x(t) &= \frac{14 e^t \sin{
  \sqrt{6} t} (-12 \sqrt{6} + 
   e^t (5 \sqrt{6} \cos{\sqrt{6} t} + 2 \sin{\sqrt{6} t}))}{864 - 48 e^t (15 \cos{\sqrt{6} t} + \sqrt{6} \sin{\sqrt{6} t}) + 
 e^{2 t} (77 + 73 \cos{2 \sqrt{6} t} + 10 \sqrt{6} \sin{2 \sqrt{6} t})},\\
 y(t) &= - \frac{14 e^t \cos{
  \sqrt{6} t} (-36 + 
   e^t (15 \cos{\sqrt{6} t} + \sqrt{6} \sin{\sqrt{6} t}))}{864 - 48 e^t (15 \cos{\sqrt{6} t} + \sqrt{6} \sin{\sqrt{6} t}) + 
 e^{2 t} (77 + 73 \cos{2 \sqrt{6} t} + 10 \sqrt{6} \sin{2 \sqrt{6} t})}.
 \end{split}
\end{align}

Figure~\ref{fig:L_example1}(c) shows the exact solution for initial condition $(0,1)$, Eqs.~\ref{eq:L_ex1_x_y}, compared to the numerical solution computed using NDSolve in \textit{Mathematica}. The analytical and numerical solutions match as expected.  We can use the analytical solution to solve for the finite escape time of this initial value problem by setting the denominators of Eqs.~\ref{eq:L_ex1_x_y} to zero. The singularity occurs approximately at time $t = 2.10895$.

\begin{figure}[th!]
    \centering
    \includegraphics[width=0.8\linewidth]{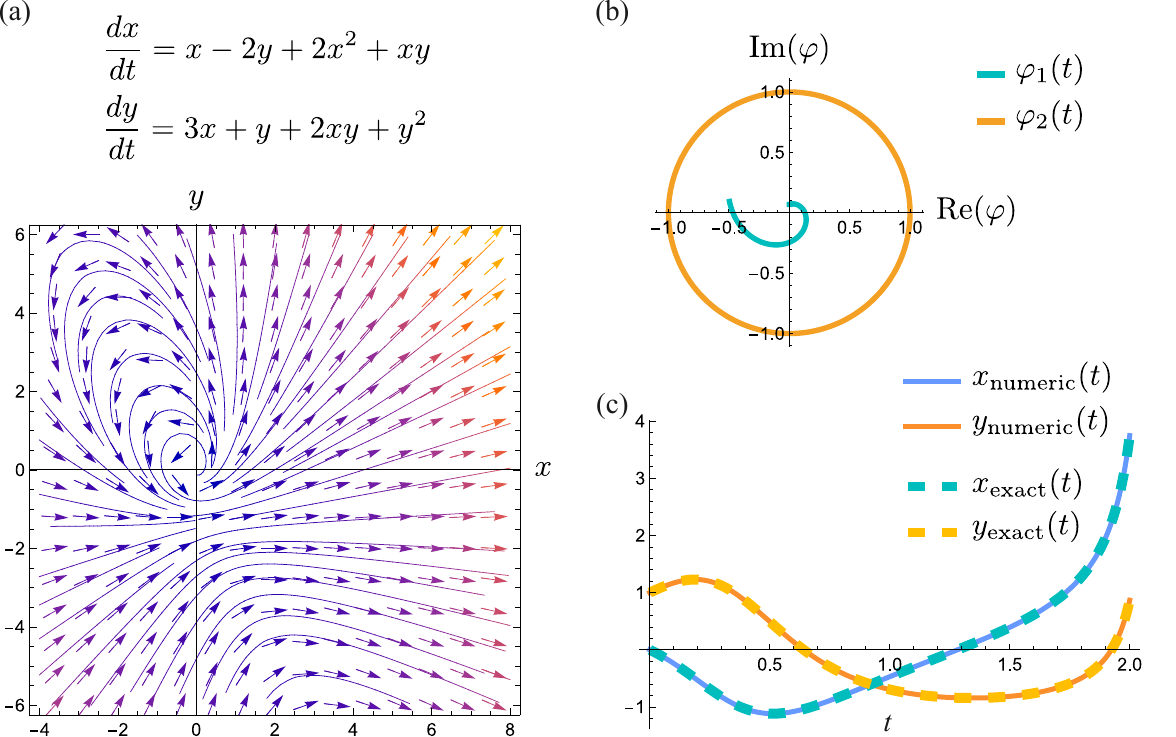}
    \caption{(a) Space $\mathcal L$ dynamical system. (b) Solutions in eigenfunction space for initial condition $(x_0, y_0) = (0,1)$. (c) Analytical versus numerical solutions for the initial condition $(x_0, y_0)=(0,1)$.}
    \label{fig:L_example1}
\end{figure}

\subsection{ODE with parameters in \texorpdfstring{$\mathcal{X}$}{X}: example 1}

Consider the ODE,

\begin{align}
\begin{split}\label{eq:ex1_ode}
    \frac{dx}{dt} &= -4x -2y + x^2 - \frac{2}{3}y^2\\
    \frac{dy}{dt} &= 3x +y +2xy + \frac{5}{3}y^2
    \end{split}
\end{align}
shown in Figure~\ref{fig:example1}(a).
The parameter values for this ODE, 
$$(a_1, a_2, a_3, a_4, a_5, b_1, b_2, b_3, b_4, b_5) = (-4, -2, 1, 0, -\frac{2}{3}, 3, 1, 0, 2, \frac{5}{3}),$$ are in space $\mathcal X$.
Two independent eigenvalue/eigenfunction pairs for Eqs.~\ref{eq:ex1_ode} using the formulas from Section~\ref{subsec:space_X} are
\begin{align*}
    \lambda_1 = 1, \quad \varphi_1(x,y) &= \frac{-1+x+y}{x+y},\\
    \lambda_2 = 2, \quad \varphi_2(x,y) &= \frac{-6+3x+2y}{3x+2y}.
\end{align*}
Consider the initial condition $(x_0, y_0) = (-3, -4)$. This initial conditions in the eigenfunction spaces are $\varphi_1(x_0, y_0) = \varphi_1(-3, -4) = \frac{8}{7}$ and $\varphi_2(x_0, y_0) = \varphi_2(-3, -4) = \frac{23}{17}$. 
From Eqs.~\ref{eq:phi1t_phi2t}, the solutions in the eigenfunction space are $\varphi_1(t) =\frac{8}{7} e^{t} $ and $\varphi_2(t) =\frac{23}{17} e^{2t} $ (Fig.~\ref{fig:example1}(b)).
Using Eqs.~\ref{eq:x_y_gen} and \ref{eq:xt_yt} we determine the solution to the initial value problem to be
\begin{align}
\begin{split}\label{eq:ex1_x_y}
    x(t) &= \frac{476-816e^t + 322 e^{2t}}{119-136e^t - 161 e^{2t} + 184 e^{3t}}, \\
    y(t) &= \frac{-357 + 816e^t - 483 e^{2t}}{119 - 136 e^t - 161 e^{2t} + 184 e^{3t}}.
    \end{split}
\end{align}
Figure~\ref{fig:example1}(c) shows the exact, analytical solution for initial condition $(-3,-4)$, Eqs.~\ref{eq:ex1_x_y}, compared to the numerical solution computed using the numerical differential equation solver NDSolve in \textit{Mathematica}.  The analytical and numerical solutions match as expected.

\begin{figure}[th!]
    \centering
    \includegraphics[width=0.9\linewidth]{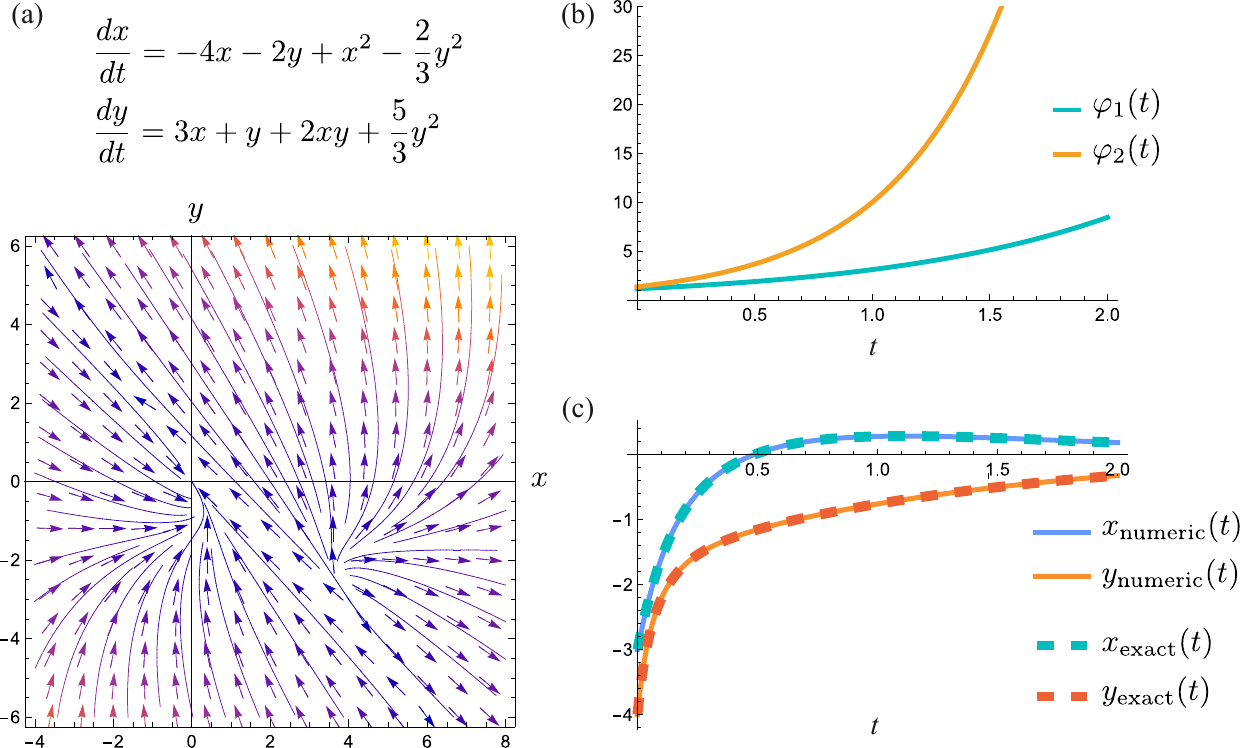}
    \caption{(a) Space $\mathcal X$ example 1 dynamical system. (b) Solutions in eigenfunction space for initial condition $(x_0, y_0) = (-3,-4)$. (c) Analytical versus numerical solutions for the initial condition $(x_0, y_0) = (-3,-4)$.}
    \label{fig:example1}
\end{figure}

\subsection{ODE with parameters in \texorpdfstring{$\mathcal{X}$}{X}: example 2}
Consider the ODE,
\begin{align}
\begin{split}\label{eq:ex2_ode}
    \frac{dx}{dt} &= -3x -2y + x^2 - \frac{2}{3}y^2\\
    \frac{dy}{dt} &= 3x +y +2xy + \frac{4}{3}y^2
    \end{split}
\end{align}
shown in Figure~\ref{fig:example2}(a).  The parameter values for this ODE,
$$(a_1, a_2, a_3, a_4, a_5, b_1, b_2, b_3, b_4, b_5) = (-3, -2, 1, 0, -\frac{2}{3}, 3, 1, 0, 2, \frac{4}{3}),$$ are in space $\mathcal X$.
The eigenvalues and eigenfunctions for Eqs.~\ref{eq:ex2_ode} are 
\begin{align*}
    \lambda_1 = 1-i \sqrt{2}, \quad \varphi_1(x,y) &= \frac{-3+3i\sqrt{2} + 3x+(2+i\sqrt{2})y}{3x+ (2+i\sqrt{2})y},\\
    \lambda_2 = 1+i \sqrt{2}, \quad \varphi_2(x,y) &= \frac{-3-3i\sqrt{2} + 3x+(2-i\sqrt{2})y}{3x+ (2-i\sqrt{2})y}.
\end{align*}
The initial condition $(x_0, y_0)=(2, -1)$ in the eigenfunction space is $\varphi_1(x_0, y_0) = \frac{i-2\sqrt{2}}{4i+\sqrt{2}}$ and $\varphi_2(x_0, y_0) = \frac{i+2\sqrt{2}}{4i-\sqrt{2}}$.
The solutions in the eigenfunction space are
$\varphi_1(t) =\frac{i-2\sqrt{2}}{4i+\sqrt{2}} e^{(1-i\sqrt{2})t} $ 
and $\varphi_2(t) = \frac{i+2\sqrt{2}}{4i-\sqrt{2}} e^{(1+i\sqrt{2})t} $ (Fig.~\ref{fig:example2}(b)).
The solution in the original state space is
\begin{align}
\begin{split}\label{eq:ex2_x_y}
    x(t) &= \frac{6(\sqrt{2} - e^t \sin{\sqrt{2}t})}{\sqrt{2}(2+e^{2t}) - 4 e^t \sin{\sqrt{2}t}},\\
    y(t) &= \frac{-6 \sqrt{2} + 3 e^t (\sqrt{2} \cos{\sqrt{2} t} + 2 \sin{\sqrt{2}t})}{\sqrt{2}(2+e^{2t}) - 4 e^t \sin{\sqrt{2}t}}.
    \end{split}
\end{align}

Figure~\ref{fig:example2}(c) shows the analytical solution for initial condition $(2,-1)$, Eqs.~\ref{eq:ex2_x_y}, compared to the numerical solution computed using NDSolve in \textit{Mathematica}.  The analytical and numerical solutions match as expected.

\begin{figure}[th!]
    \centering
    \includegraphics[width=0.9\linewidth]{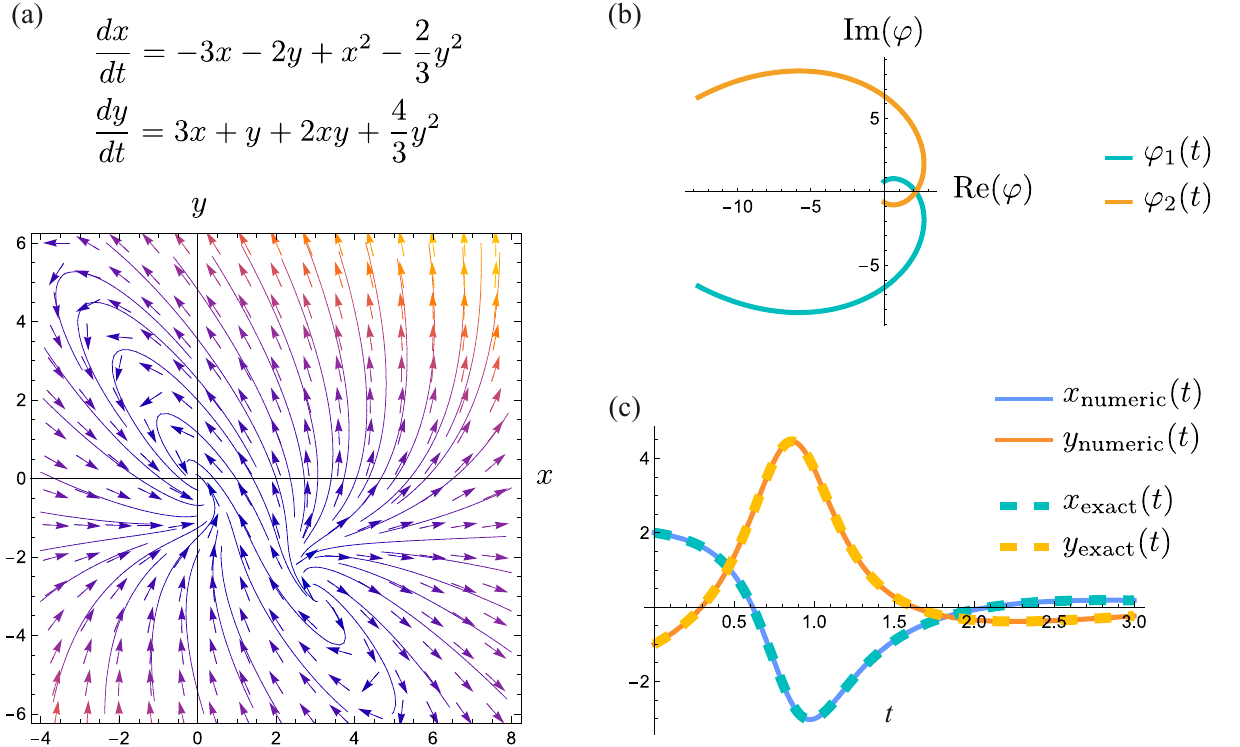}
    \caption{(a) Space $\mathcal X$ example 2 dynamical system. (b) Solutions in eigenfunction space for initial condition $(2,-1)$. (c) Analytical versus numerical solutions for the initial condition $(x_0, y_0) = (2,-1)$.}
    \label{fig:example2}
\end{figure}

\subsection{ODE with parameters in \texorpdfstring{$\mathcal{X}$}{X}: example 3}

Consider the ODE,
\begin{align}
\begin{split}\label{eq:ex3_ode}
    \frac{dx}{dt} &= -x +2y + x^2 +xy - \frac{3}{2}y^2\\
    \frac{dy}{dt} &= -2x +y -\frac{x^2}{2} +3xy -y^2
    \end{split}
\end{align}
shown in Figure~\ref{fig:example3}(a). The parameter values for this ODE, 
$$(a_1, a_2, a_3, a_4, a_5, b_1, b_2, b_3, b_4, b_5) = (-1, 2, 1, 1, -\frac{3}{2}, -2, 1, -\frac{1}{2}, 3, -1),$$ are in space $\mathcal X$.
The eigenvalues and eigenfunctions for Eqs.~\ref{eq:ex3_ode} are 
\begin{align*}
    \lambda_1 = -i \sqrt{3}, \quad \varphi_1(x,y) &= \frac{6+i10\sqrt{3} + 14x -(7 +i7\sqrt{3})y}{14x -(7 +i7\sqrt{3})y},\\
    \lambda_2 = i \sqrt{3}, \quad \varphi_2(x,y) &= \frac{6-i10\sqrt{3} +14x -  (7-i7\sqrt{3})y}{14x - (7-i7\sqrt{3})y}
\end{align*}
Consider the initial condition $(x_0, y_0) = (2, -1)$. This initial conditions in the eigenfunction spaces are $\varphi_1(x_0, y_0) =  \frac{41i-17\sqrt{3}}{35i-7\sqrt{3}}$ and $\varphi_2(x_0, y_0) = \frac{41i+17\sqrt{3}}{35i+7\sqrt{3}}$. 
From Eqs.~\ref{eq:phi1t_phi2t}, the solutions in the eigenfunction space are $\varphi_1(t) =\frac{41i-17\sqrt{3}}{35i-7\sqrt{3}} e^{-i\sqrt{3}t} $ and $\varphi_2(t) =\frac{41i+17\sqrt{3}}{35i+7\sqrt{3}} e^{i\sqrt{3}t} $ (Fig.~\ref{fig:example3}(b)).
The solution in the original state space is
\begin{align}
\begin{split}\label{eq:ex3_x_y}
    x(t) &= \frac{1}{64} \left(-5 + \frac{-798\sqrt{3} + 5733 \sin{\sqrt{3}t}}{-70\sqrt{3} + 64\sqrt{3} \cos{\sqrt{3}t} + 33 \sin{\sqrt{3}t}} \right), \\
    y(t) &= \frac{1}{64} \left( 41 + \frac{21(30 \sqrt{3} + 91 \sin{\sqrt{3}t})}{-70\sqrt{3} + 64\sqrt{3} \cos{\sqrt{3}t} + 33 \sin{\sqrt{3}t}} \right).
    \end{split}
\end{align}
Figure~\ref{fig:example3}(c) shows the analytical solution for initial condition $(2, -1)$, Eqs.~\ref{eq:ex3_x_y}, compared to the numerical solution computed using NDSolve in \textit{Mathematica}. The analytical and numerical solutions match well.

These examples exhibit the variety in dynamics displayed by the quadratic ODE families that are solvable using the rational eigenfunction method with linear rational eigenfunctions outlined above.  Similar to analytical solutions to linear ODEs, the solutions to these nonlinear ODEs are combinations of exponential, sine, and cosine functions.

\begin{figure}[th!]
    \centering
    \includegraphics[width=0.9\linewidth]{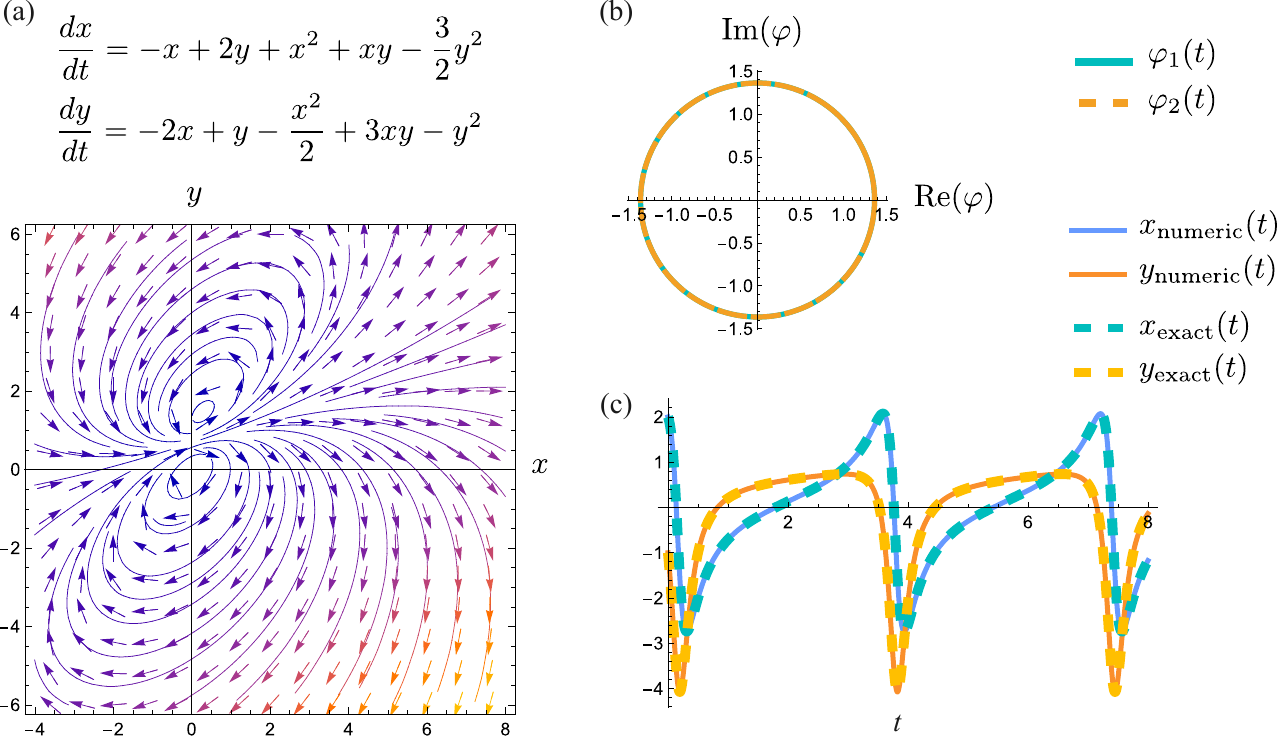}
    \caption{(a) Space $\mathcal X$ example 3 dynamical system. (b) Solutions in eigenfunction space for initial condition $(2,-1)$. (c) Analytical versus numerical solutions for the initial condition $(2,-1)$.}
    \label{fig:example3}
\end{figure}

\section{Discussion}
\label{sec:discussion}

Polynomial ordinary differential equations govern critical systems across biology, engineering, physics, and economics \cite{strogatzNonlinearDynamicsChaos2019, braunDifferentialEquationsTheir1978, porterOverviewPolynomicSystem1976}. Exact solutions are useful for tasks such as global control design and locating limit cycles \cite{isidoriNonlinearControlSystems1990, camachoModelPredictiveControl2007}
Yet there are currently no general methods to derive \emph{closed-form} Koopman eigenfunctions --- an unmet need that we take a step toward addressing by outlining a method to find eigenfunctions with a rational form.
We have established one of the first frameworks for analytically solving classes of quadratic ODEs. Analytical solutions allow for quantitative analysis and a reduced computational burden.
This work synthesizes nonlinear algebra, computational algebraic geometry, and dynamical systems theory into a coherent pipeline, with preliminary results demonstrating feasibility and setting new technical benchmarks.

\smallskip

\subsection{Comparison to other methods}

Data-driven Koopman methods such as EDMD and neural variants excel at computing \emph{approximate} eigenfunctions, which have proven useful for analyzing ODEs and designing control procedures \cite{mauroyGlobalStabilityAnalysis2016,williamsDataDrivenApproximation2015,kutzDynamicModeDecomposition2016,kaiserDatadrivenDiscoveryKoopman2021,liExtendedDynamicMode2017,williamsKernelbasedMethodDatadriven2015,bruntonModernKoopmanTheory2021, kvalheimExistenceUniquenessGlobal2021}.
However, these methods do not deliver \emph{closed-form} eigenfunctions—and thus cannot certify global, exact solutions, especially near singular behavior. Our approach instead produces \emph{exact} eigenfunctions by converting the eigenfunction equation into a structured polynomial system and analyzing its solution set with nonlinear algebra. This yields:
(i) explicit solvability conditions in the ODE’s coefficient space (the newly introduced \emph{rational eigenfunction variety}); 
(ii) principled detection of eigenfunction multiplicity via singular-locus geometry; and 
(iii) closed-form parameter formulas when solvable, precisely the ingredients required for analytic solutions.

While data-driven methods have met with much success, there are other approaches to solve nonlinear ODEs.
Power series methods posit a power series solution to the IVP and coefficients are determined recursively \cite{barlowPowerSeriesSolutions2025}. Unfortunately, power series solutions have some of the same drawbacks as EDMD --- solutions are approximate, highly inaccurate outside of certain regions, and an infinite number of terms is usually required for convergence \cite{barlowPowerSeriesSolutions2025}.

Rational functions are useful basis functions in other contexts, such as representing reduced order models of nonlinear systems and for approximating invariant manifolds
\cite{kleinEntropystableModelReduction2025, kaszasGlobalizingManifoldbasedReduced2025}.  Rational functions are used as a basis for approximation in contexts other than dynamical systems. For example, non-uniform rational B-spline (NURBS) curves and surfaces are used widely for geometric representation and design \cite{pieglNURBSSurvey1991, pieglNURBSBook1997}.  Our results indicate that rational functions may be broadly useful basis functions for finding eigenfunctions of polynomial ODEs.

\smallskip

\subsection{Extensions to broader classes of ODEs and eigenfunctions}

While we applied our method to two-dimensional, quadratic ODE and sought eigenfunctions with a linear rational form, the underlying process is not inherently limited to this narrow case.  A natural extension of this work would be to apply the same method to quadratic ODEs but allow the eigenfunctions to have a more general rational form, such as quadratic polynomials in the numerator and denominator rather than linear polynomials. Ref.~\cite{morrisonSolvingNonlinearOrdinary2024} shows two examples of quadratic ODEs with quadratic rational eigenfunctions; allowing the eigenfunctions to have a more general quadratic rational form should expand the set of quadratic ODEs that can be solved using this method.

Similarly, the polynomial ODEs could also be made more general to include higher order terms such as cubic terms.  The ODEs could also be generalized to higher dimensions, such as three variable systems of ODEs.  More interesting dynamics, such as chaos, appear in three dimensional ODEs and this approach for finding eigenfunctions may provide a useful avenue for analyzing the increasingly complex dynamics that can appear in higher dimensional systems \cite{strogatzNonlinearDynamicsChaos2019}. Methods from computational algebraic geometry are able to solve high-dimensional systems and so the limitations to the generalization of this process is not yet clear.

\section{Conclusions}
\label{sec:conclusions}

We employ methods from computational algebraic geometry to find eigenfunctions that have a closed-form for certain families of two-dimensional quadratic ODEs.
By positing a linear rational form for the eigenfunctions in the eigenfunction equation, we derive a system of polynomial equations that describe the constraints on the ODE parameters and eigenfunction parameters in tandem.  By applying methods from computational algebraic geometry, we solved for the ODE families that admit linear rational eigenfunctions and derived explicit formulas for the eigenfunctions and eigenvalues. We then used the eigenfunctions to construct analytical solutions to the ODEs.
We demonstrated our method on several example ODEs, showing that the analytical solutions match numerical solutions. This work provides a new framework for analytically solving some classes of polynomial ODEs and has the potential to be extended to more general classes of ODEs and eigenfunctions.


\appendix

\section{Normal form for quadratic ODEs}\label{sec:normal_form}
Note that Eqs.~\ref{eq:ODE_gen} do not contain added constants. However, any quadratic ODE containing added constants, 
\begin{align}
\begin{split}\label{eq:ODE_gen_wconsts}
    \frac{du}{dt} &= a_0 +  a_1 u + a_2 v + a_3 u^2 + a_4 uv + a_5 v^2,\\
    \frac{dv}{dt} &= b_0 + b_1 u + b_2 v + b_3 u^2 + b_4 uv + b_5 v^2,
    \end{split}
\end{align}
can be transformed into the normal form of Eqs.~\ref{eq:ODE_gen} using the change of variables $u = \frac{a_1 x - a_0}{a_1}$, and $v = \frac{b_1 y - b_0}{b_1}$.  In this way, quadratic ODEs with constants, Eqs.~\ref{eq:ODE_gen_wconsts}, may be converted to normal form, Eqs.~\ref{eq:ODE_gen}, solved, if possible, and then transformed back to the original variables.

\section{Details on the nonlinear algebra computations}\label{sec:alg_geo_appx}

This section offers a more detailed description of the method used to solve the system of equations~\ref{eq:system for rational eigenfunctions}.  The code and polynomials resulting from it are available on the GitHub repository for this paper. 

\subsection{The rational eigenfunction variety of an ODE system}

Finding restrictions that  coefficients must satisfy in order for  a polynomial system to have a solution can be done using computational algebraic geometry. To explain the computation, we need to introduce some vocabulary.   The power of algebraic geometry is the  connection between ideals generated by polynomial systems and the geometric objects which are their common zeros, the connection we only state at a high level and without the technical details.  The formalism behind algebra-geometry dictionary is summarized in, for example, Chapter 4 of the standard textbook \cite{CLO}. 

Let $\bm{H}(\bm{a}, \bm{b}, \bm{c}, \bm{d}, \lambda) = (f_1,\dots,f_{10})$ be the ten polynomials in the system in Equation~\eqref{eq:system for rational eigenfunctions}; that is, $f_1=-c_0d_0\lambda$, and so on.  Since we are interested  on possible constraints on $a_i$'s and $b_i$'s so that  a nontrivial solution  $(c_0,c_1,c_2,d_0,d_1,d_2)$ exists, we treat $a_i$'s and $b_i$'s also as indeterminates,  start by seeking a joint solution for all $17$ unknowns, and eliminate the unwanted unknowns later. 
To this end, the $10$ defining polynomials are considered in a polynomial ring  with $17$ variables, which in algebraic notation is denoted by \[R:=\mathbb C[\lambda,c_0,c_1,c_2,d_0,d_1,d_2,a_1,\dots,a_5,b_1,\dots,b_5].\] 
Define $I\subset R$ to be the \emph{ideal} generated by the polynomials (cf.\  Definition 2 in \S 1.4 of \cite{CLO}):  
\[ I:=\langle f_1,\dots,f_{10}\rangle :=\{\sum_{i=1}^{10}h_if_i : h_i\in R\} \subset R.\]
Crucially, by definition of an ideal, each point $P\in\mathbb C^{17}$ which is a solution to the system in Equation~\eqref{eq:system for rational eigenfunctions} must also satisfy $g(P)=0$ for all $g\in I$. 
The ideal $I$ is  the infinite set of \emph{all polynomial consequences} of the system defined by the $f_i$'s, meaning that \emph{any} polynomial relation on any (subset of) indeterminates of $R$ which is implied by \eqref{eq:system for rational eigenfunctions} lies in $I$. 
This is key to answering the question:  if there is a polynomial relation that holds among the coefficients $a_i$'s and $b_i$'s, then it can be recovered from $I$ by a suitable symbolic computation. Let us discover what that computation entails. 

Define $V(I)$ to be the set of all solutions to the system $f_1=\dots=f_{10}=0$; it is equivalent to (cf.\ Chapter 4 and Proposition 4 in \S 1.4. of \cite{CLO}): 
\begin{equation}\label{eq:variety of the system for rational eigenfunctions}
    V(I) = \{P\in\mathbb C^{17}: g(P)=0 \mbox{ for all } g\in I\}.
\end{equation} 
This set is called the affine algebraic \emph{variety} of the system, and it lives in the $17$-dimensional affine space $\mathbb C^{17}$. (In general, an algebraic variety is defined simply a set of common solutions to a system of polynomial equations.) 

Next, we know that the system~\ref{eq:system for rational eigenfunctions} always has a solution, namely the trivial one. Therefore we remove the trivial solutions where  $\varphi$, $\lambda$, or the denominator of $\varphi$ is zero. 
Let $P=(P_1,\dots,P_{17})\in\mathbb C^{17}$ be an arbitrary solution. The set of nontrivial solutions is therefore the following:  
\[ 
V(I)\setminus\{P:P_1=0\}\setminus \{P:P_2=P_3=P_4=0\} \setminus \{ P:P_5=P_6=P_7=0\}. 
\]
The key observation is that the sets of points that are `removed' from the variety $V(I)$ are each varieties, as they are defined by polynomial equations setting particular coordinates to zero.  

\begin{proposition}
    Let $V(I)$ be the variety defined by the system~\ref{eq:system for rational eigenfunctions}, as defined in \ref{eq:variety of the system for rational eigenfunctions}. 
    The coefficients of the ODE ($a_i$ and $b_i$) that will result in a nontrivial solution $(\lambda,c_0,c_1,c_2,d_0,d_1,d_2)$ to Eq.~\ref{eq:system for rational eigenfunctions} (values for $c_i, d_i, \lambda$) lie in the following algebraic variety: 
    \begin{equation}\label{eq:the rational eigenfunction variety}
        \mathcal E := W\cap \mathbb C^{10}, \mbox{ where } W = V(I) \setminus V(\lambda)\setminus V(c_0,c_1,c_2)\setminus V(d_0,d_1,d_2).
    \end{equation}
    The intersection with $\mathbb C^{10}$ is on the last $10$ coordinates of $\mathbb C^{17}$ corresponding to the parameters $a_1,\dots,a_5,b_1,\dots,b_5$. 
    The set of restrictions that the coeffcieints of the ODE must satisfy in order for the nontrivial solution to exist is therefore the following ideal: 
    \begin{equation}\label{eq:elimination ideal}
        I(\mathcal E) = I(W)\cap \mathbb Q[a_1,\dots,a_5,b_1,\dots,b_5],
    \end{equation}
    where 
    \begin{equation}\label{eq:removing components by saturation}
        I(W) = (( I(V) : \left<\lambda\right>^\infty ) :\left<c_0,c_1,c_2\right>^\infty ) :\left<d_0,d_1,d_2\right>^\infty. 
    \end{equation}
\end{proposition}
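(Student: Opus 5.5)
The argument splits into a geometric part and an algebraic part. The geometric part is just bookkeeping. A tuple $(\bm a,\bm b,\bm c,\bm d,\lambda)\in\mathbb C^{17}$ gives a nontrivial linear rational eigenfunction exactly when it satisfies $\bm H=\bm 0$, has $\lambda\neq 0$, $\bm c\neq \bm 0$ and $\bm d\neq\bm 0$. The vanishing loci of these conditions are the coordinate varieties $V(\lambda)$, $V(c_0,c_1,c_2)$ and $V(d_0,d_1,d_2)$. So the nontrivial solution set is precisely the set difference $W$ in \eqref{eq:the rational eigenfunction variety}. The ODE coefficients admitting such a solution are then the image of $W$ under the coordinate projection $\pi:\mathbb C^{17}\to\mathbb C^{10}$ onto the $(\bm a,\bm b)$ coordinates; the notation ``$W\cap\mathbb C^{10}$'' should be read as this projection. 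This already shows that every admissible $(\bm a,\bm b)$ lies in $\pi(W)$, and hence in its Zariski closure.

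The algebraic part has three steps.

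\textbf{First}, identify the ideal of the Zariski closure of a set difference. For varieties $V,Z$ with $Z=V(J)$, we have $I(V\setminus Z)=I(V):J^\infty$, the standard statement $\overline{V\setminus Z}=V(I(V):J^\infty)$ (CLO, Ch.~4, \S4). I would apply this three times in succession, with $J=\langle\lambda\rangle$, then $\langle c_0,c_1,c_2\rangle$, then $\langle d_0,d_1,d_2\rangle$. At each stage I would use that $\overline{A\setminus Z}$ and $A\setminus Z$ have the same vanishing ideal, so removing from the closure is harmless. This gives \eqref{eq:removing components by saturation}.

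\textbf{Second}, use the Closure Theorem. The Zariski closure of a coordinate projection is the variety of the elimination ideal (CLO, Ch.~3, \S2), and the ideal of all polynomials vanishing on $\pi(W)$ is $I(W)\cap\mathbb C[\bm a,\bm b]$. The reason is that a polynomial in $(\bm a,\bm b)$ alone vanishes on $\pi(W)$ if and only if, viewed in $R$, it vanishes on $W$. This gives \eqref{eq:elimination ideal}.

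\textbf{Third}, justify writing $\mathbb Q$ rather than $\mathbb C$. The generators $f_i$ have integer coefficients, and Gröbner-basis computations of saturation and elimination stay over $\mathbb Q$. The resulting generators therefore lie in $\mathbb Q[\bm a,\bm b]$ and generate the complex elimination ideal after extension of scalars.

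The main obstacle is the gap between $I(V)$, the radical ideal, and the generated ideal $I=\langle f_1,\dots,f_{10}\rangle$ that is actually computed. The statement uses $I(V)$, so I would note that the saturation of $I$ has the same radical. By the Nullstellensatz, the varieties agree, and the computation may use $I$ followed by a radical if desired. A second subtlety is that $\pi(W)$ need not itself be closed. The proposition only claims that admissible coefficients \emph{lie in} $\mathcal E$ (its closure) and that \eqref{eq:elimination ideal} gives the \emph{necessary} restrictions, so the containment $\pi(W)\subseteq V(I(\mathcal E))$ suffices. I would state explicitly that the converse can fail on a lower-dimensional set, which the Closure Theorem describes.
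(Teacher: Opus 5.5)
Your proposal is correct and follows the same route as the paper's proof. Both encode the trivial solutions as the coordinate varieties $V(\lambda)$, $V(c_0,c_1,c_2)$, $V(d_0,d_1,d_2)$, remove them by iterated saturation via $\overline{V(I)\setminus V(J)}=V(I:J^\infty)$ (CLO, Ch.~4, \S4), and pass to the $(\bm a,\bm b)$-coordinates through the elimination ideal. Your additions are also correct and make explicit what the paper's sketch glosses over: reading ``$W\cap\mathbb C^{10}$'' as the projection $\pi(W)$ (a literal intersection with the coordinate subspace $\{\lambda=0,\ \bm c=\bm d=\bm 0\}$ would be empty), and handling the $I$ versus $I(V)$, $\mathbb Q$ versus $\mathbb C$, and non-closedness of $\pi(W)$ issues.
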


\begin{definition}\label{defn:the rational eigenfunction variety}
We will name $\mathcal E$ from \eqref{eq:the rational eigenfunction variety} the  \emph{rational eigenfunction variety} for the ODE. It is a complex affine algebraic variety. Its structure is in next subsection.
\end{definition}

\begin{remark}
    $\mathcal E$ is a \emph{complex} algebraic variety. Real parts of algebraic varieties are not easy to obtain; but numerical algebraic geometry software can help with that aspect, if needed. 
    
    Also, we typically work over algebraically closed fields, so coefficients of the system are in $\mathbb C$. However, when we carry out symbolic computations, we seek exact symbolic answers, we work over an exact field such as $\mathbb Q$. This is just the standard way things are done in computational algebraic geometry (and even for applications). 
\end{remark}
\begin{proof}[Proof of Proposition]
    The statements in \eqref{eq:the rational eigenfunction variety} is merely a translation of the  equation above it. Namely, the set $\{P:P_1=0\}$ corresponds to the set of points in $\mathbb C^{17}$ for which we require $\lambda=0$, and so the corresponding variety is the variety of the polynomial $\lambda$, and so on. $W$ is obtained from $V(I)$ by removing trivial solutions; the intersection with the $10$-dimensional space then restricts the variety to a subvariety on the relevant coordinates, $a_i$'s and $b_i$'s. 
    
    For two ideals $I$ and $J$, the operation $I:J^\infty$ is called \emph{saturation}.     For reference, ideal saturations are covered in Section 4 of Chapter 4 of \cite{CLO}. 
    The variety $V(I:J^\infty)$ is the Zariski closure of $V(I)\setminus V(J)$;  without delving into all of the details we note that this is the smallest variety containing the set in which we are interested. 
    Saturation is the algebraic operation on ideals which removes corresponding subvariety from the larger variety. Thus, Equation~\eqref{eq:removing components by saturation} is also a restatement of the algebraic geometry facts. 
    
    The final step is to eliminate unwanted indeterminates. The system defining $I$ was originally considered in $17$ unknowns; but once we have removed trivial solutions, we are interested in the polynomial consequences of $I$ that involve the indeterminates $a_1,\dots,a_5,b_1,\dots,b_5$ only. Algebraically, this means we need to compute the \emph{elimination ideal} $I(\mathcal E) = I(W)\cap \mathbb Q[a_1,\dots,a_5,b_1,\dots,b_5]$; this is Equation~\eqref{eq:elimination ideal}. 
    This ideal can be computed by Gr\"obner bases; see Chapter 3 of \cite{CLO}. 
\end{proof}

\subsection{The geometric consequence of the algebraic computation}

We computed the ideal $I(\mathcal E)$ using the computer algebra system {\tt Macaulay2} \cite{M2}. 
It is instructive to see the intermediate computational outputs:
The first saturation computation, $I(V) : \left<\lambda\right>^\infty$, returns $228$ polynomials of degrees between $2$ and $7$.  
The second saturation (removing the points where all $c_i=0$) returns $195$ ideal generators of degrees $2$ to $5$.  
The last saturation (removing points where all $d_i=0$) returns $404$ polynomials of degree $2$ to $6$. 
Finally, elimination to preserve only the polynomials in $a_i$'s and $b_i$'s returns $10$ polynomials all of degree $4$. 
These $10$ quartics are therefore \emph{the defining equations of the complex rational eigenfunction variety} for the ODE. 
The quartics can be found listed on the \href{https://github.com/mmtree/linear_rational_eigenfunctions}{GitHub repository}.

Now, the question is: what \emph{is} the set of $a_i$'s and $b_i$'s that satisfy these ten quartic equations? We ask {\tt Macaulay2} for some essential information, such as dimension and degree and the decomposition into irreducible components. The following is the summary: 
\begin{equation}
    \mathcal E = \mathcal Q \cup \mathcal L,
\end{equation}
where $\mathcal L$ is a $6$-dimensional \emph{linear} space defined by the $4$ linear polynomials\footnote{Meaning, the constraints are obtaining by  setting these linear polynomials to $=0$.} 
\[ b_3,\quad a_5,\quad a_4-b_5,\quad a_3-b_4,
\] 
and $\mathcal Q$ is an irreducible variety in $\mathbb C^{10}$ of dimension $8$ cut out by 
$1$ cubic and $6$ quartics.  The $7$ polynomials defining $\mathcal E$ are easy to obtain using the {\tt Macaulay2} code provided.

The following is thus a complete description of the set of complex points $(a_1..a_5,b_1..b_5)$ for which the system \eqref{eq:system for rational eigenfunctions} has a nontrivial solution. 
\begin{corollary}
    The ODE has a nontrivial rational eigenfunction if and only if 
    
    $a_1,\dots,a_5,b_1,\dots,b_5\in\mathbb C^{10}$ satisfy: 
    \begin{itemize}
        \item either all four of the equations defining the linear variety $\mathcal L$; 
        \item or all seven ($1$ cubic and $6$ quartics) equations defining the $8$-dimensional  variety $\mathcal Q$. 
    \end{itemize}

    In particular, with probability $1$, a randomly generated point in $\mathbb C^{10}$ will \emph{not} satisfy these criteria. 
\end{corollary}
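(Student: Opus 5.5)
The plan is to read the Corollary off from the Proposition and from the Macaulay2 decomposition $\mathcal E=\mathcal Q\cup\mathcal L$, with one caveat made explicit. By the Proposition, the ODE coefficients admitting a nontrivial solution $(\lambda,\bm c,\bm d)$ of Eq.~\ref{eq:system for rational eigenfunctions} are exactly the image of $W$ under the coordinate projection $\pi:\mathbb C^{17}\to\mathbb C^{10}$. By the Closure Theorem (Chapter 3 of \cite{CLO}), the ideal $I(\mathcal E)$ cuts out the Zariski closure $\overline{\pi(W)}$. So the proof has three parts: (i) identify $V(I(\mathcal E))$ with $\mathcal Q\cup\mathcal L$; (ii) turn the statement about the closure into a pointwise ``if and only if''; (iii) prove the probability statement.

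For (i), I use the computed primary (here prime) decomposition of the ten quartics. Its components are the linear ideal $\langle b_3,a_5,a_4-b_5,a_3-b_4\rangle$ and the prime ideal generated by the cubic and six quartics. Since the variety of an intersection of ideals is the union of their varieties, a point lies in $V(I(\mathcal E))$ exactly when it satisfies all four linear equations or all seven equations of $\mathcal Q$. This gives the ``only if'' direction: any ODE with a nontrivial eigenfunction lies in $\pi(W)\subseteq\overline{\pi(W)}=\mathcal L\cup\mathcal Q$.

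For (ii), the converse, it is not enough to invoke the closure. I would check directly that points of the components are attained.
\begin{itemize}
\item For $\mathcal L$, take the explicit eigenfunction of Section~\ref{subsec:space_L}. Substituting it into $\Omega$ should give identically zero on $\mathcal L$. The degenerate loci ($b_1=0$, $a_4b_1=b_2b_4$, and similar) must be handled separately, for example with $\varphi=x/y$-type choices or by a direct small case analysis.
\item For $\mathcal Q$, I would argue generically. Over a dense open subset of $\mathcal Q$ one can solve for $(\lambda,\bm c,\bm d)$, for instance by the Extension Theorem with the leading coefficients nonvanishing. The remaining lower-dimensional boundary is then covered by a case analysis on which of $d_1,d_2$ vanish.
\end{itemize}

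\textbf{Main obstacle.} The statement is really the equality $\pi(W)=\mathcal L\cup\mathcal Q$, not merely equality of closures, and $\pi(W)$ is only constructible. I would first try to show that $\pi(W)$ is closed. One route is the homogeneity of the equations in $\bm c$ and in $\bm d$ separately: this lets me projectivize those fibers into $\mathbb P^2\times\mathbb P^2$ and use properness, leaving only $\lambda$ unbounded. Since $\lambda$ satisfies a monic relation coming from the linear-coefficient equations (for example $c_1d_0\lambda=\dots$), an argument of the form ``$\lambda$ is bounded on bounded sets'' should close this step. Failing that, I would be content to state the Corollary up to a proper closed subset.

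For (iii), note that $\mathcal L$ and $\mathcal Q$ have dimensions $6$ and $8$, both less than $10$. A proper algebraic subset of $\mathbb C^{10}$ has Lebesgue measure zero, so a point drawn from any absolutely continuous distribution lies outside $\mathcal E$ with probability $1$.
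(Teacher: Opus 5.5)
Your parts (i) and (iii) are the entire argument the paper gives: it reads the Corollary off the Macaulay2 decomposition $V(I(\mathcal E))=\mathcal Q\cup\mathcal L$ and notes that an $8$-dimensional variety has measure zero. You are right that the elimination ideal only cuts out the Zariski closure $\overline{\pi(W)}$; the paper passes over this. But your plan for (ii), showing $\pi(W)$ is closed so that the literal ``if'' holds, cannot succeed, because the ``if'' direction is false.

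The zero vector field is a counterexample. It satisfies the four linear equations of $\mathcal L$. It also lies in $\mathcal Q$: rescaling time multiplies $(\bm a,\bm b,\lambda)$ by a nonzero constant, so every component of $\mathcal E$ is a closed cone. Yet there $\Omega=\lambda PQ$, so $\lambda\neq0$ forces $\bm c=\bm 0$ or $\bm d=\bm 0$.

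A less trivial point of $\mathcal L$ is $\dot x=x^2$, $\dot y=xy$. Every trajectory is rational in $t$ ($x=x_0/(1-x_0t)$, $y=y_0x/x_0$ for $x_0\neq0$). So no rational $\varphi$ can satisfy $\varphi(\bm x(t))=\varphi(\bm x_0)e^{\lambda t}$ with $\varphi(\bm x_0)\lambda\neq0$.

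More systematically, every $\mathcal L$-system is $\dot{\bm x}=A\bm x+\bm x\,(\ell\cdot\bm x)$ with $\ell=(b_4,a_4)$. Set $M=\begin{pmatrix}A&0\\-\ell^T&0\end{pmatrix}$ and $\hat P(x,y)=(c_1,c_2,c_0)\,M\,(x,y,1)^T$, and similarly $\hat Q$. Then one computes $\dot PQ-P\dot Q=Q\hat P-P\hat Q$. Unique factorization in $Q(\hat P-\lambda P)=P\hat Q$ (after homogenizing) shows that the nontrivial linear rational eigenfunctions are exactly ratios of left eigenvectors of $M$ with distinct eigenvalues. The spectrum of $M$ is $\{\mu_1,\mu_2,0\}$, with $\mu_i$ the eigenvalues of $A$. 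Hence $\pi(W)$ misses the whole $4$-dimensional locus $\{\operatorname{tr}A=\det A=0\}\cap\mathcal L$. This is precisely where the paper's $\mathcal L$-eigenvalues $\Delta$ and $(a_1+b_2-\Delta)/2$ collapse to $0$.

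This is exactly where your properness argument breaks. Projectivizing $\bm c$ and $\bm d$ takes care of $\bm c\neq\bm 0$ and $\bm d\neq\bm 0$, and a bound on $\lambda$ would prevent escape to infinity. But nothing keeps the limiting $\lambda$ away from $0$, and $W$ imposes the open condition $\lambda\neq0$, which does not survive limits. On $\mathcal L$, every admissible $\lambda$ is a difference of eigenvalues of $M$, and these all tend to $0$ as $A$ becomes nilpotent. Incidentally, $c_1d_0\lambda=\dots$ is not a monic relation for $\lambda$. For the same reason, your case analyses on the degenerate loci of $\mathcal L$ and on the boundary of $\mathcal Q$ must fail at such points.

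So your fallback is forced. What can be proved is the following. The ``only if'' direction holds pointwise. $\pi(W)$ is a constructible set with Zariski closure $\mathcal L\cup\mathcal Q$, and hence contains a dense Zariski-open subset of each component; on $\mathcal L$ this is exactly the complement of the nilpotent locus. The probability-one claim still holds, since it uses only the containment of $\pi(W)$ in a proper subvariety.
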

The last statement holds because the algebraic variety $\mathcal E$ is $8$-dimensional and therefore a set of measure zero in the 10-dimensional space.  

Next, we wish to explore the subspace $\mathcal Q$, as it appears that the \emph{points in its singular locus}  give rise to a pair of linearly independent solutions. By definition of singularities, these points are such that $Jacobian(f_1,\dots,f_7)$ becomes the zero matrix; namely, $codim(\mathcal Q)=2$, therefore the singular locus are the points for which the Jacobian's $1\times1$ minors vanish. 

Macaulay2 computation reveals that all points on $\mathcal X$ that satisfy the condition  $$Jacobian(f_1,\dots,f_7)=0$$ satisfy the following 9 quadratic equations: 
\begin{align*}
     4a_5b_3-a_4b_4,            \\
 2a_4b_3-2a_3b_4+b_4^2-4b_3b_5,  \\
      2a_2b_3-a_1b_4+b_2b_4-2b_1b_5,  \\
      2a_5b_1-a_2b_4,                 \\
       a_4b_1-a_1b_4+b_2b_4-2b_1b_5,   \\
      2a_3b_1-2a_1b_3+2b_2b_3-b_1b_4, \\
      a_4^2-4a_3a_5+2a_5b_4-2a_4b_5,  \\
      a_2a_4-2a_1a_5+2a_5b_2-2a_2b_5, \\
      2a_2a_3-a_1a_4+a_4b_2-a_2b_4   
\end{align*}
The set of points satisfying these equations is 6-dimensional in the 10-dimensional ambient space. 
These are precisely the equations defining the variety $\mathcal X$, from Section~\ref{sec:solvable_ODEs_eig_params}.

\section{Alternative approach to obtaining ODE and eigenfunction parameters}\label{sec:alt_method}

In an alternative approach, we initially do not assume anything about the form of $\frac{dx}{dt}$ and $\frac{dy}{dt}$ (we do not impose a quadratic polynomial form), but instead initially impose a structure on the eigenfunctions.  We solve for $\frac{dx}{dt}$ and $\frac{dy}{dt}$
as functions of two independent eigenfunctions $\varphi_1$ and $\varphi_2$ as well as their partial derivatives and eigenvalues, and then impose restrictions on the output to obtain quadratic forms for $\frac{dx}{dt}$ and $\frac{dy}{dt}$.

We begin by applying the eigenfunction equation to $\varphi_1$ and $\varphi_2$,
\begin{align*}
        \frac{\partial \varphi_1}{\partial x} \frac{dx}{dt} + \frac{\partial \varphi_1}{\partial y} \frac{dy}{dt} &= \lambda_1 \varphi_1, \\
         \frac{\partial \varphi_2}{\partial x} \frac{dx}{dt} + \frac{\partial \varphi_2}{\partial y} \frac{dy}{dt} &= \lambda_2 \varphi_2. 
\end{align*}

We can write this system of equations using a matrix formulation,
\begin{align*} 
       \begin{bmatrix}
        \frac{\partial \varphi_1}{\partial x} & \frac{\partial \varphi_1}{\partial y} \\
         \frac{\partial \varphi_2}{\partial x} & \frac{\partial \varphi_2}{\partial y}
       \end{bmatrix}
       \begin{bmatrix}
        \frac{dx}{dt} \\
        \frac{dy}{dt}
       \end{bmatrix}
       = 
        \begin{bmatrix}
        \lambda_1 \varphi_1 \\
        \lambda_2 \varphi_2
       \end{bmatrix}.
\end{align*}
This system of equations has a unique solution if the determinant of the matrix is nonzero,

$$\frac{\partial \varphi_1}{\partial x} \frac{\partial \varphi_2}{\partial y} - \frac{\partial \varphi_2}{\partial x} \frac{\partial \varphi_1}{\partial y}  \neq 0.$$

Solving this system for $\frac{dx}{dt}$ and $\frac{dy}{dt}$ produces
\begin{align}\label{eq:appx_odes_gen1}
\begin{split}
        \frac{dx}{dt}  &= \frac{\frac{\partial \varphi_2}{\partial y} \lambda_1 \varphi_1 - \frac{\partial \varphi_1}{\partial y} \lambda_2 \varphi_2}{  \frac{\partial \varphi_1}{\partial x} \frac{\partial \varphi_2}{\partial y} - \frac{\partial \varphi_2}{\partial x} \frac{\partial \varphi_1}{\partial y}},\\
    \frac{dy}{dt}  &= \frac{ \frac{\partial \varphi_1}{\partial x} \lambda_2 \varphi_2 - \frac{\partial \varphi_2}{\partial x} \lambda_1 \varphi_1 }{ \frac{\partial \varphi_1}{\partial x}  \frac{\partial \varphi_2}{\partial y} -\frac{\partial \varphi_2}{\partial x} \frac{\partial \varphi_1}{\partial y} }. 
    \end{split}
\end{align}

We now impose a linear rational structure for the eigenfunctions,
\begin{align*}
    \varphi_1(x,y) &= \frac{c_0 + c_1 x + c_2 y}{d_0 + d_1 x + d_2 y}, \\
    \varphi_2(x,y) &= \frac{k_0 + k_1 x + k_2 y}{m_0 + m_1 x + m_2 y}.
\end{align*}
Normalizing the eigenfunctions by setting $c_1 = d_1 = k_1 = m_1 = 1$, and further adding the restriction that $d_0 = m_0 = 0$, as our previous results indicate that there exists a set of eigenfunction solutions that lack a constant in either the numerator or denominator, we further restrict our eigenfunctions to the form,
\begin{align*}
    \varphi_1(x,y) &= \frac{c_0 + x + c_2 y}{ x + d_2 y}, \\
    \varphi_2(x,y) &= \frac{k_0 + x + k_2 y}{ x + m_2 y}.
\end{align*}
Using this form for the eigenfunctions in Eqs.~\ref{eq:appx_odes_gen1} produces
\begin{align*}
\frac{dx}{dt}  &= \frac{N_1(x,y)}{D_1(x,y)}, \\
\frac{dy}{dt}  &= \frac{N_2(x,y)}{D_2(x,y)}, \\
\end{align*}
where the numerators are
\begin{align*}
\begin{split}
N_1(x,y) &= (c_0 d_2 k_0 m_2(\lambda_1 - \lambda_2)) y \\
&+ (d_2 m_2 (c_2 k_0 \lambda_1 - c_0 k_2 \lambda_2))y^2 \\
&+ (c_0 k_0 (m_2 \lambda_1 - d_2 \lambda_2))x \\
&+ (k_0 m_2(d_2(\lambda_1 - \lambda_2) + c_2 (\lambda_1 + \lambda_2)) - c_0 d_2(m_2(\lambda_2 - \lambda_1) + k_2(\lambda_1 + \lambda_2))) xy \\
&+ (c_2 d_2 (m_2 - k_2) \lambda_1 + c_2 k_2 m_2 \lambda_2 - d_2 k_2 m_2 \lambda_2)x y^2 \\
&+ (\lambda_1 (m_2 - k_2) + \lambda_2 (c_2 - d_2))x^2,
\end{split}\\
\begin{split}
    N_2(x,y) &= (c_0 k_0 (d_2 \lambda_1 - m_2 \lambda_2)) y \\
    &+ (d_2 k_0 m_2 \lambda_2 + c_2 k_0 (d_2 \lambda_1 - m_2 \lambda_2) + c_0(d_2 k_2 \lambda_1 - d_2 m_2 \lambda_1 - k_2 m_2 \lambda_2)) y^2 \\
    &+ (c_2 d_2 (k_2 - m_2)\lambda_1 - c_2 k_2 m_2 \lambda_2 + d_2 k_2 m_2 \lambda_2) y^3 \\
    &+ (c_0 k_0 (\lambda_1 - \lambda_2)) x \\
    &+ (c_2 k_0(\lambda_1 - \lambda_2) + c_0 k_2(\lambda_1 - \lambda_2) + d_2 k_0 (\lambda_1 + \lambda_2) - c_0 m_2 (\lambda_1 + \lambda_2)) xy \\
    &+ (c_2 k_2 (\lambda_1 - \lambda_2) + d_2 m_2 (\lambda_2 - \lambda_1) + d_2 k_2 (\lambda_1 + \lambda_2) - c_2 m_2 (\lambda_1 + \lambda_2)) xy^2 \\
    &+ (k_0 \lambda_1 - c_0 \lambda_2) x^2 \\
    &+ (k_2 \lambda_1 - m_2 \lambda_1 + (d_2 - c_2)\lambda_2) x^2 y,
\end{split}
\end{align*}
and the denominators are
\begin{align*}
\begin{split}
D_1(x,y) &=c_0 k_0 (d_2 - m_2) \\
&+ (k_0 (d_2 - c_2) + c_0 (k_2 - m_2)) x \\
&+ (c_0 d_2 (k_2 - m_2) + k_0 m_2(d_2 - c_2)) y, \\
\end{split}\\
\begin{split}
D_2(x,y) &=c_0 k_0 (m_2 - d_2)\\
&+ (k_0 (c_2 - d_2) + c_0(m_2 - k_2))x\\
&+ (k_0 m_2(c_2 - d_2) + c_0 d_2 (m_2 - k_2))y. \\
\end{split}
\end{align*}

We wish to constrain the terms so that $\frac{dx}{dt}$ and $\frac{dy}{dt}$ are both quadratic polynomials.  This means that both denominators should be nonzero constants --- all terms in the denominator should be zero other than the $x^0 y^0$ term.  The numerators should not contain any terms higher than second order.  Imposing these constraints results in a polynomial system of equations, similar to how the system Eq.~\ref{eq:system for rational eigenfunctions} was derived by setting terms in the previous equation equal to zero. 
In this way, we can solve for eigenfunctions that produce quadratic ODEs and determine the family of quadratic ODEs that have independent eigenfunctions of the linear rational form that we posed.

\section*{Data and Code Availability Statement}
The defining equations of the complex rational eigenfunction variety and the code used to generate the results in this paper are available on GitHub at \url{https://github.com/mmtree/linear_rational_eigenfunctions}.

\section*{Acknowledgments}
The authors are grateful to Alasdair Hastewell for insightful conversations about using rational eigenfunctions to solve ODEs.
The authors are also grateful to Shaowei Lin for an interesting discussion about the singularities of the rational eigenfunction variety and how they should be studied in general.

\bibliographystyle{plain}


\end{document}